\newtheorem{theorem}{Theorem}
\newtheorem{lemma}[theorem]{Lemma}
\newtheorem{claim}[theorem]{Claim}
\newtheorem{question}[theorem]{Question}
\theoremstyle{definition}
\theoremstyle{remark}
\newtheorem{remark}[theorem]{Remark}
\newtheorem*{claim*}{Claim}
\long\def\symbolfootnote[#1]#2{\begingroup
\def\thefootnote{\fnsymbol{footnote}}\footnote[#1]{#2}\endgroup}
\begin{document}

\begin{center}

\LARGE Probabilistic one-player Ramsey games\\ via deterministic two-player games
\vspace{2mm}

\Large Michael Belfrage \quad Torsten M\"utze \quad Reto Sp\"ohel\symbolfootnote[2]{The author was supported by the Swiss National Science Foundation, grant 200020-119918.}
\vspace{2mm}

\large
  Institute of Theoretical Computer Science \\
  ETH Z\"urich, 8092 Z\"urich, Switzerland \\
  {\small\tt mbe@student.ethz.ch}, \{{\small\tt muetzet|rspoehel}\}{\small\tt @inf.ethz.ch}
\vspace{5mm}

\small

\begin{minipage}{0.8\linewidth}
\textsc{Abstract.}
Consider the following probabilistic one-player game: The board is a graph with~$n$ vertices, which initially contains no edges. In each step, a new edge is drawn uniformly at random from all non-edges and is presented to the player, henceforth called Painter. Painter must assign one of~$r$ available colors to each edge immediately, where $r\geq 2$ is a fixed integer. The game is over as soon as a monochromatic copy of some fixed graph $F$ has been created, and Painter's goal is to `survive' for as many steps as possible before this happens.

We present a new technique for deriving upper bounds on the threshold of this game, i.e., on the typical number of steps Painter will survive with an optimal strategy. More specifically, we consider a deterministic two-player variant of the game where the edges are not chosen randomly, but by a second player Builder. However, Builder has to adhere to the restriction that, for some real number $d$, the ratio of edges to vertices in all subgraphs of the evolving board never exceeds $d$. We show that the existence of a winning strategy for Builder in this deterministic game implies an upper bound of $n^{2-1/d}$ for the threshold of the original probabilistic game. Moreover, we show that the best bound that can be derived in this way is indeed the threshold of the game if $F$ is a forest. We illustrate our technique with several examples, and derive new explicit bounds for the case when $F$ is a path. 
\end{minipage}

\end{center}

\vspace{5mm}


\section{Introduction}
\label{sec-introduction}

Consider the following probabilistic one-player game: The board is a graph with~$n$ vertices, which initially contains no edges. In each step, a new edge is drawn uniformly at random from all non-edges and is presented to the player, henceforth called Painter. Painter must assign one of~$r$ available colors to each edge immediately, where $r\geq 2$ is a fixed integer. The game is over as soon as a monochromatic copy of some fixed graph $F$ has been created, and Painter's goal is to `survive' for as many steps as possible before this happens. We refer to this as the \emph{online $F$-avoidance game with $r$ colors}. This game was introduced by Friedgut, Kohayakawa, R\"{o}dl, Ruci\'{n}ski, and Tetali~\cite{MR2037068} for the case $F=K_3$ and $r=2$, and further investigated in~\cite{org-lb,org-ub}.

For any graph $F$ and any number $r$ of colors, this game has a threshold $N_0=N_0(F,r,n)$ in the following sense~\cite[Lemma~7]{org-lb}: For any $N = o(N_0)$, there exists a coloring strategy that \aas (asymptotically almost surely, i.e.\ with probability $1-o(1)$ as $n$ tends to infinity) does not create a monochromatic copy of $F$ in the first $N$ steps of the process. On the other hand, if $N=\omega(N_0)$ then \emph{any} online strategy will \aas create a monochromatic copy of $F$ within the first $N$ steps.

Let us point out two bounds on the threshold of the online game that follow from well-known offline results. Clearly, Painter can only  lose the online $F$-avoidance game once the evolving random graph contains a copy of $F$. A well-known result of Bollob\'{a}s~\cite{MR620729} gives a threshold of $n^{2-1/m(F)}$ for the latter property, where
$m(F):=\max_{H\seq F} e_H/v_H$. (Throughout we denote, for any graph $H$, by $e_H$ or $e(H)$ the number of its edges, and by $v_H$ or $v(H)$ the number of its vertices.)

On the other hand, Painter can only survive in the online game as long as the evolving random graph is not $(F,r)$-Ramsey, i.e., does not have the property that every $r$-edge-coloring contains a monochromatic copy of $F$. R\"{o}dl and Ruci\'{n}ski~\cite{MR1249720, MR1276825} proved a threshold of $n^{2-1/m_2(F)}$ for this property, where 
$m_2(F):=\max_{H\seq F} (e_H-1)/(v_H-2)$.
Thus it is clear from the outset that for any $F$ and $r$ the threshold of the online $F$-avoidance game with $r$ colors satisfies \begin{equation} \label{eq:trivialbounds}
n^{2-1/m(F)}\leq N_0(F,r,n)\leq n^{2-1/m_2(F)}\enspace,
\end{equation}
where in fact the lower bound can be interpreted as the threshold of the `game' with $r=1$ colors.

In~\cite{org-lb}, the following lower bound approach was analyzed completely: Denote the colors by $\{1,\ldots,r\}$, and fix suitable subgraphs $H_i\seq F$, $1\leq i\leq r$. Painter's strategy is to color every edge presented with the highest available color $i$ that does not create a monochromatic copy (in color $i$) of the corresponding graph $H_i$ (if no such color is available she uses color~1). We will refer to this approach with $H_1=\cdots=H_r=F$ as the \emph{greedy strategy}, and to the same general approach with an \emph{optimal} choice of $H_i\seq F$, $1\leq i\leq r$, as the \emph{smart greedy strategy}. We say that a strategy \emph{attains} some lower bound $N'(F,r,n)$ on the threshold $N_0(F,r,n)$ if for any $N=o(N')$, \aas it does not create a monochromatic copy of~$F$ in the first $N$ steps of the game.

\begin{theorem}[\cite{org-lb}] \label{thm:smart-greedy}
Let $F$ be a graph that is not a forest, and let $r\geq 2$. Then the threshold of the online $F$-avoidance game with $r$ colors satisfies \begin{equation*}
  N_0(F,r,n) \geq n^{2 - {1}/{\mTwoOnl(F,r)}}\enspace,
\end{equation*}
where $\mTwoOnl(F,r)$ is defined recursively by
\begin{equation*}
	\mTwoOnl(F,r):=
	\begin{cases}
	  \displaystyle \max_{H\seq F}\frac{e_H}{v_H} & \text{if $r = 1$} \enspace, \\
	  \displaystyle \max_{H\seq F}\frac{e_{H}}{v_{H}-2+1/\mTwoOnl(F,r-1)}  & \text{if $r \geq 2$} \enspace.
	\end{cases}
\end{equation*}
This lower bound is attained by the smart greedy strategy.
\end{theorem}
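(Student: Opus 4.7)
I would proceed by induction on $r$. The base case $r=1$ is Bollob\'as's threshold theorem for the containment of a fixed subgraph in $G(n,N)$: with only one color available Painter has no choice but to color every edge with color~1, and she loses precisely when the evolving random graph first contains a copy of $F$, which happens at $n^{2-1/m(F)} = n^{2-1/\mTwoOnl(F,1)}$.

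For the inductive step, assume the result for $r-1$ colors, let $H_r \seq F$ attain the maximum in the recursive definition of $\mTwoOnl(F,r)$, and let $H_1,\ldots,H_{r-1}$ be the subgraphs that witness the inductive statement for $r-1$ colors. The defining property of the smart greedy strategy is that an edge $e$ receives a color $i<r$ only when coloring it with color $r$ would complete a copy of $H_r$ in color $r$. Consequently, every such edge lies in a copy of $H_r$ whose other $e_{H_r}-1$ edges all carry color $r$. Setting $N=n^{2-1/\mTwoOnl(F,r)}$, the expected number of copies of $H_r$ in $G(n,N)$ is $\Theta(n^{v_{H_r}}(N/n^2)^{e_{H_r}}) = \Theta(n^{2-1/\mTwoOnl(F,r-1)})$, where the cancellation uses the defining equation $\mTwoOnl(F,r)=e_{H_r}/(v_{H_r}-2+1/\mTwoOnl(F,r-1))$. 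Standard concentration then shows that \aas the number of edges of color less than $r$ is $o(n^{2-1/\mTwoOnl(F,r-1)})$ as long as $N=o(n^{2-1/\mTwoOnl(F,r)})$.

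At this point one would like to invoke the induction hypothesis on the edges of color $<r$, viewed as a sub-game with $r-1$ colors, to conclude that Painter creates no monochromatic $F$ in those colors either. The main obstacle is that these edges do \emph{not} form a uniformly random subset of their size: their appearance is dictated by the locations of color-$r$ copies of $H_r$ built up earlier in the process, and they are heavily correlated with the shape of those copies. Overcoming this requires strengthening the inductive statement so that it controls, for every subgraph $H \seq F$ relevant to the recursion for $\mTwoOnl(F,r-1)$, the number of copies of $H$ among the edges of color $<r$ and compares it to what one would see in a truly random graph of the same size. The role of the maximum over $H\seq F$ in the definition of $\mTwoOnl(F,r)$ is precisely to make these many subgraph counts sub-critical simultaneously; the bulk of the work lies in this multi-subgraph concentration estimate.
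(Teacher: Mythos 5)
Your exponent calculation is correct: with $N = n^{2-1/\mTwoOnl(F,r)}$ and $H_r$ the maximizer, the cancellation $v_{H_r} - e_{H_r}/\mTwoOnl(F,r) = 2 - 1/\mTwoOnl(F,r-1)$ indeed places the expected number of copies of $H_r$ --- and hence the number of edges that can be forced into a color $<r$ --- at the threshold of the $(r-1)$-color game. You have also put your finger on the genuine difficulty: the edges carrying a color $<r$ are neither a uniformly random subset of the board nor do they arrive in a uniformly random order, so the induction hypothesis, which is a statement about the process $G_N$, cannot be applied to this sub-board as a black box.

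Your proposed repair (strengthen the induction to control subgraph counts among the color-$<r$ edges) does not quite reach the target, because controlling counts still leaves the adversarial \emph{temporal} correlations unaddressed, and an online game is sensitive to arrival order, not merely to final subgraph statistics. The proof in the cited source~[org-lb] abandons the inductive coupling and instead performs a \emph{backward analysis} of the smart greedy strategy: if Painter, playing smart greedy with subgraphs $H_1,\dots,H_r\seq F$, ever completes a monochromatic $F$, then by recursively unwinding the reason each relevant edge received its color one finds that the (uncolored) board contains a member of an explicit finite family $\cW$ of witness graphs, obtained by gluing copies of $F$ and the $H_i$ along edges. One then checks directly that every $W\in\cW$ satisfies $m(W)\geq\mTwoOnl(F,r)$ and applies a first-moment argument to conclude that \aas no such $W$ exists in $G_N$ as long as $N\ll n^{2-1/\mTwoOnl(F,r)}$. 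This is precisely the pattern the present paper reuses in Section~\ref{sec:examples} for the bowtie lower bound; it sidesteps the non-uniformity problem entirely because it never needs to condition on the colored sub-board being random --- it only counts subgraphs in the final uncolored $G_N$.
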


From a qualitative point of view, the main interest of this lower bound is the fact that for every graph $F$ we have
\begin{equation*}
  \lim_{r\to\infty}\mTwoOnl(F,r) = \mTwo(F)\enspace.
\end{equation*}
Thus the threshold of the online game approaches the threshold of the offline setting as the number $r$ of colors increases, cf.~\eqref{eq:trivialbounds}.

As was also pointed out in~\cite{org-lb}, in general the smart greedy strategy is \emph{not} optimal, i.e., there exist non-forests $F$ for which the threshold is strictly higher than~$n^{2 - {1}/{\mTwoOnl(F,r)}}$. We will encounter such an example below.

For the game with two colors and $F$ satisfying a certain precondition, an upper bound matching the lower bound given by Theorem~\ref{thm:smart-greedy} was proved in~\cite{org-ub}, making crucial use of the already mentioned results by R\"{o}dl and Ruci\'{n}ski about offline colorings of random graphs~\cite{MR1276825}. In particular, the following explicit threshold results for complete graphs $K_\ell$ and cycles $C_\ell$ were obtained.

\begin{theorem} [\cite{org-lb, org-ub}]
\label{thm:clique-avoidance}
For any $\ell \geq 3$, the threshold of the online $K_\ell$-avoidance game with $r=2$ colors is
\begin{equation*}
  N_0(K_\ell,2,n)=n^{\(2-\frac{2}{\ell+1}\)\(1-\binom{\ell}{2}^{-2}\)} \enspace.
\end{equation*}
The threshold is attained by the greedy strategy.
\end{theorem}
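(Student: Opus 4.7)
The plan is to prove the lower and upper bounds separately. For the lower bound I would apply Theorem~\ref{thm:smart-greedy} with $F=K_\ell$ and $r=2$. Computing the recursion, $\mTwoOnl(K_\ell,1)=m(K_\ell)=(\ell-1)/2$, and then
\[
\mTwoOnl(K_\ell,2) \;=\; \max_{2\le j\le \ell}\frac{\binom{j}{2}}{j-2+\tfrac{2}{\ell-1}}.
\]
A short cross-multiplication shows that this maximum is attained at $j=\ell$, so $\mTwoOnl(K_\ell,2)=\ell(\ell-1)^2/(2(\ell^2-3\ell+4))$. Using the factorization $\ell^2(\ell-1)^2-4=(\ell-2)(\ell+1)(\ell^2-\ell+2)$, routine algebra then simplifies
\[
2-\frac{1}{\mTwoOnl(K_\ell,2)}\;=\;\Big(2-\frac{2}{\ell+1}\Big)\Big(1-\binom{\ell}{2}^{-2}\Big),
\]
matching the claimed exponent. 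Since the optimal subgraphs in both steps of the recursion are $K_\ell$ itself, the smart greedy strategy of Theorem~\ref{thm:smart-greedy} degenerates to the plain greedy strategy, which is what the theorem asserts is optimal.

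For the upper bound I would follow the offline-to-online approach of~\cite{org-ub}, which leverages the R\"odl-Ruci\'nski threshold for the Ramsey property of random graphs~\cite{MR1276825}. Fix an arbitrary online Painter strategy $\Pi$, assume $N=\omega(n^{2-1/\mTwoOnl(K_\ell,2)})$, and aim to show $\Pi$ \aas produces a monochromatic $K_\ell$ within the first $N$ steps. The idea is to construct a concrete witness graph $J$ satisfying (i) $m_2(J)=\mTwoOnl(K_\ell,2)$ and (ii) the deterministic forcing property that whenever $J$ appears in the evolving random graph, together with sufficiently many ``extension'' copies, strategy $\Pi$ is compelled to use color $1$ on some edge that completes a color-$1$ copy of $K_\ell$. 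One builds $J$ by recursively gluing copies of $K_\ell$ in a way that mirrors the recursive definition of $\mTwoOnl(K_\ell,\cdot)$: each edge of an ``inner'' $K_\ell$ must be colored $1$ because coloring it $2$ would create one of the many color-$2$ copies of $K_\ell$ hanging off it. By Bollob\'as's theorem $J$ appears \aas by step $N$, and combining this with the forcing argument yields the upper bound.

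The main obstacle is the adaptivity of Painter: one cannot simply union-bound over strategies, since the number of strategies is doubly exponential. Instead, one must show via a second-moment argument that, regardless of how $\Pi$ adapts, many disjoint extension gadgets appear in the right ``time windows'' of the random process. Simultaneously, the density $m_2(J)$ of the witness graph must be tuned precisely to match the target exponent, which is what pins down the exact threshold. Handling both the combinatorial construction of $J$ and the associated second-moment estimate is the technical heart of~\cite{org-ub}, and in the proof it would take up the bulk of the work.
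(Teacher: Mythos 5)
There is no proof of Theorem~\ref{thm:clique-avoidance} in this paper for you to match: the theorem is quoted as a known result from~\cite{org-lb, org-ub}, the introduction explicitly states that ``there seems to be no easy way of recovering all the results of~\cite{org-ub} by our methods,'' and the concluding section singles out the clique case as one where Question~\ref{question} remains open despite a known threshold. In other words, the present paper cannot reprove the upper bound half of this statement with its own Theorem~\ref{thm:general-upper-bound}; it defers entirely to the cited sources.

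With that caveat, here is an assessment of your sketch on its own merits. The lower bound part is correct and essentially complete: for $\ell\ge 3$ the graph $K_\ell$ is not a forest, so Theorem~\ref{thm:smart-greedy} applies, and a short check confirms that both maxima in the recursion for $\mTwoOnl(K_\ell,2)$ are attained at $H=K_\ell$ (the function $j\mapsto \binom{j}{2}/(j-2+2/(\ell-1))$ is increasing for $j\geq 3$ and its value at $j=\ell$ dominates the value at $j=2$), so the smart greedy strategy indeed reduces to the plain greedy strategy, and your algebraic simplification of the resulting exponent to $\big(2-\tfrac{2}{\ell+1}\big)\big(1-\binom{\ell}{2}^{-2}\big)$ is correct. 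The upper bound part, however, is only a high-level outline, not a proof: ``construct a concrete witness graph $J$ with $m_2(J)=\mTwoOnl(K_\ell,2)$ and a deterministic forcing property,'' ``recursively gluing copies of $K_\ell$,'' and ``many disjoint extension gadgets appear in the right time windows'' name the kinds of objects and estimates one would need, but you neither give the construction nor the second-moment argument that overcomes Painter's adaptivity, and you acknowledge as much (``this would take up the bulk of the work''). As a map of where the difficulty lies and which tools --- two-round exposure plus R\"odl--Ruci\'nski --- are needed, your outline is sensible and consistent with how the cited paper proceeds; as a proof attempt, the upper bound direction is incomplete.
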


\begin{theorem} [\cite{org-lb, org-ub}]
\label{thm:cycle-avoidance}
For any $\ell \geq 3$, the threshold of the online $C_\ell$-avoidance game with $r=2$ colors is
\begin{equation*}
  N_0(C_\ell,2,n)=n^{1+1/\ell} \enspace.
\end{equation*}
The threshold is attained by the greedy strategy.
\end{theorem}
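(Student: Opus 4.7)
The plan is to prove matching lower and upper bounds on $N_0(C_\ell,2,n)$.

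For the lower bound I will apply Theorem~\ref{thm:smart-greedy} with $F=C_\ell$ and $r=2$. Every proper subgraph of $C_\ell$ is a disjoint union of paths, hence a forest with density at most~$1$, while $C_\ell$ itself has density exactly~$1$, so $\mTwoOnl(C_\ell,1)=m(C_\ell)=1$. The recurrence then gives
\[
\mTwoOnl(C_\ell,2) \;=\; \max_{H\seq C_\ell}\frac{e_H}{v_H-1} \;=\; \frac{\ell}{\ell-1},
\]
with the maximum attained at $H=C_\ell$. Substituting produces $N_0(C_\ell,2,n)\geq n^{2-(\ell-1)/\ell}=n^{1+1/\ell}$; since the optimal choice is $H_1=H_2=C_\ell$, the smart greedy strategy collapses to the ordinary greedy strategy here.

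For the matching upper bound I will invoke the paper's main Builder--Painter reduction: it suffices to exhibit a winning strategy for Builder in the deterministic two-player $C_\ell$-avoidance game subject to the density constraint $d=\ell/(\ell-1)$, since this will yield $N_0(C_\ell,2,n)\leq n^{2-1/d}=n^{1+1/\ell}$. My approach is to have Builder iteratively grow long monochromatic paths: at each stage Builder maintains, in each of the two colours, paths of increasing length, and periodically probes the closing edge $\{u,v\}$ between the endpoints of a monochromatic path of length $\ell-1$. Painting this edge in the same colour closes a monochromatic $C_\ell$, while the other colour turns the probe into a new building block. Alternating between colours in a balanced manner, and ensuring that each probed edge introduces fresh vertices whenever the density budget is tight, should force a monochromatic $C_\ell$ after a bounded number of moves.

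The main obstacle will be the density bookkeeping: one must verify that every subgraph $H$ appearing on the board during play satisfies $e_H/v_H\leq \ell/(\ell-1)$. The extremal ratio is attained uniquely by $C_\ell$ itself, so Builder must avoid creating any denser auxiliary configuration --- for example $K_4$ (already forbidden for $\ell\geq 4$) or any short cycle carrying an extra chord. Designing the strategy so that the winning property and this strict density invariant hold in tandem is the delicate step; once this is in place, the conclusion follows directly from the paper's main reduction theorem.
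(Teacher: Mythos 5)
Your lower bound argument is correct and coincides with the paper's: applying Theorem~\ref{thm:smart-greedy} with $F=C_\ell$, $r=2$ gives $\mTwoOnl(C_\ell,2)=\ell/(\ell-1)$, and since the maximum is attained at $H=C_\ell$ the smart greedy strategy degenerates to the plain greedy strategy. Likewise, your overall plan for the upper bound --- exhibit a Builder win in the deterministic game with density restriction $d=\ell/(\ell-1)$ and invoke Theorem~\ref{thm:general-upper-bound} --- is exactly the paper's framework.

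However, the Builder strategy you sketch for the upper bound has a genuine gap, and it is precisely the gap you yourself flag as ``the delicate step.'' The idea of repeatedly probing the closing edge of a monochromatic $P_{\ell-1}$ does not, as stated, produce a reusable building block: if Painter colors the probe blue you merely obtain a $C_\ell$ with a single blue edge, and it is unclear how to grow a long blue path from such fragments without either re-using vertices (and hence piling up chords that push a small subgraph above density $\ell/(\ell-1)$) or spreading out over many components in a way that never assembles a monochromatic $C_\ell$. The density bound $\ell/(\ell-1)$ is extremely tight --- it is already achieved by a single $C_\ell$ plus one chord --- so ``ensuring that each probed edge introduces fresh vertices whenever the density budget is tight'' is not a side remark but the entire content of the proof, and your proposal does not supply a construction that meets it.

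The paper resolves this with a concrete three-phase Builder strategy. First, using the fact (from \cite[Prop.~1]{MR2097326}) that Builder can enforce any fixed monochromatic tree while keeping the board a forest, Builder forces a monochromatic copy of a particular tree $T_\ell$ (a subdivided star with $\ell(\ell-1)$ leaves arranged at suitable alternating depths); this phase is density-harmless because forests have density below~$1$. Second, Builder joins consecutive leaves of that monochromatic $T_\ell$ with $\ell(\ell-1)$ new edges; a pigeonhole argument shows that regardless of Painter's choices this yields a monochromatic cycle $C$ of length $\ell(\ell-1)$. Third, Builder adds $\ell$ chords joining points of $C$ at distance $\ell-1$, forcing a monochromatic $C_\ell$. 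The crucial point --- entirely absent from your sketch --- is that in both possible outcomes of phase~2 the final board has maximum subgraph density exactly $\ell/(\ell-1)$, and verifying this requires the specific geometry of $T_\ell$ and of the chord placement. Without such an explicit construction and density check, the upper bound does not follow.
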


\subsection{A new upper bound approach}
\label{sec:intro-general-upper-bound}

In this paper we present a new approach to proving upper bounds on the threshold of the online $F$-avoidance game. In contrast to the approach pursued in~\cite{org-ub}, the ideas in the present paper cover the game with an arbitrary number of colors and extend to graphs for which the smart greedy strategy is not optimal. On the other hand, there seems to be no easy way of recovering all the results of~\cite{org-ub} by our methods, so (at least for the time being) the two approaches should be considered complementary to each other.

Our key idea is to study the deterministic two-player version of the game, which is played by two players called Builder and Painter on a board with some large number $a$ of vertices. In each step, Builder presents an edge, which Painter has to color immediately with one of $r$ available colors. As before, Painter loses as soon as she creates a monochromatic copy of $F$. So far this is exactly the same game as before, except that we replaced `randomness' by the second player Builder. However, we now impose the restriction that Builder is not allowed to present an edge that would create a (not necessarily monochromatic) subgraph $H$ with $e_H/v_H > d$, for some fixed real number $d$. In other words, Builder must adhere to the restriction that the evolving board~$B$ satisfies $m(B)=\max_{H\seq B}e_H/v_H\leq d$ at all times. We will refer to this as the \emph{deterministic $F$-avoidance game with $r$ colors and density restriction $d$ (on a board with $a$ vertices)}.

We say that Builder has a winning strategy in this game (for a fixed graph $F$, a fixed number of colors $r$, and a fixed density restriction $d$) if he can enforce Painter to create a monochromatic copy of $F$ on a board with $a$ vertices for some large enough integer $a$. Conversely, we say that Painter has a winning strategy if she can avoid creating a copy of $F$ on any finite board. (Note that we can think of such a winning strategy as a countably infinite collection of explicit winning strategies, one for every possible board size $a$.)

Our approach is based on the following theorem, which relates the original (probabilistic one-player) online $F$-avoidance game to the deterministic two-player game we just introduced.

\begin{theorem} \label{thm:general-upper-bound}
Let $F$ be a graph with at least one edge, and let $r\geq 2$. If $d>0$ is such that Builder has a winning strategy in the deterministic $F$-avoidance game with $r$ colors and density restriction~$d$, then the threshold of the online $F$-avoidance game with $r$ colors satisfies
\begin{equation*}
  N_0(F,r,n)\leq n^{2-1/d}\enspace.
\end{equation*}
\end{theorem}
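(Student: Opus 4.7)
The plan is to prove the contrapositive: if $N=N(n)=\omega(n^{2-1/d})$, then every online Painter strategy in the probabilistic game on $n$ vertices a.a.s.\ produces a monochromatic copy of $F$ within the first $N$ random edges. The main idea is to embed Builder's deterministic winning strategy into the random process played against Painter.

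First I would fix Builder's winning strategy and represent it as a finite rooted tree $T$ on a board with some $a$ vertices: each internal node is labeled by Builder's next move, has $r$ children indexed by Painter's possible responses, and each leaf corresponds to a play in which Painter has created a monochromatic copy of $F$. By the density restriction, every intermediate board $B$ arising along any root-to-node path of $T$ satisfies $m(B)\leq d$, and in particular each leaf graph $B_\ell$ satisfies $e_{B_\ell}/v_{B_\ell}\leq d$.

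The central construction is an online embedding of $T$ into the random process: maintain a partial injection $\phi$ from the deterministic board into $\{1,\dots,n\}$ together with a current node of $T$, scan the random edges as they are revealed, and whenever the next edge matches Builder's currently prescribed edge under $\phi$, observe Painter's color for it, descend along the corresponding branch of $T$ (possibly extending $\phi$ by one new vertex), and continue; otherwise discard the edge. Reaching any leaf of $T$ directly exhibits a monochromatic $F$ in Painter's coloring.

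To show that such an embedding succeeds a.a.s.\ for $N=\omega(n^{2-1/d})$, the density restriction is crucial: for any fixed root-to-leaf path ending at $\ell$, the expected number of ordered copies of $B_\ell$ in $G(n,N)$ is of order $n^{v_{B_\ell}}(N/n^2)^{e_{B_\ell}}$, which tends to infinity since $e_{B_\ell}/v_{B_\ell}\leq d$. To upgrade this first-moment calculation into an a.a.s.\ statement holding uniformly over Painter's strategy, I would run many vertex-disjoint embedding attempts in parallel and apply a second-moment (or Janson-type) concentration argument.

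The main obstacle is the coupling between the two adaptive strategies: Painter's color for any given edge depends on the whole revealed history, so parallel attempts on disjoint vertex sets are correlated through Painter's common strategy. I would address this by induction on the depth of $T$, using the density restriction to show that at each level a pool of roughly $n^{\Delta v}$ valid extensions of every surviving partial embedding is available (with $\Delta v\in\{0,1\}$ the number of new board vertices introduced by Builder's next edge), conditional on any prior history. This pool is large enough to absorb the correlations introduced by Painter's responses, and the exponent $2-1/d$ emerges naturally from the inductive bookkeeping.
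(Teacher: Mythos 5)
Your high-level plan matches the paper's: represent Builder's strategy as a rooted $r$-ary tree, embed it into the random process, and induct on the depth of the tree using a second-moment argument. However, several of the concrete steps in your sketch are either off or missing, and these are exactly where the real work lies.

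First, your inductive invariant ("a pool of roughly $n^{\Delta v}$ valid extensions per surviving partial embedding") does not account correctly for the probability that the required random edge is present. When Builder's next move adds an edge and $\Delta v$ new vertices, the number of \emph{valid} extensions of a fixed partial embedding is of order $n^{\Delta v}\cdot(N/n^2)$, not $n^{\Delta v}$; in particular when $\Delta v=0$ you do not have a pool at all but a probability $\sim N/n^2\ll 1$ per attempt, and individual attempts fail with high probability. The correct invariant is global: after depth $k$, the board contains $\Omega\bigl(n^{a}(N/n^2)^{k}\bigr)$ copies of \emph{some} depth-$k$ node of the tree, where $a$ is the fixed board size. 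This is exactly where the density restriction enters (it guarantees $n^{a}(N/n^2)^{k}\gg 1$ because $k/a\le m(B)\le d$), and your bookkeeping hides it. Second, you do not address the color-selection step: when Builder's next edge appears inside many copies simultaneously, Painter may color different instances differently, so you need a pigeonhole argument to pass to the color used on at least a $1/r$-fraction of them and thereby land on a specific child node. Third, "conditional on any prior history" is not enough to decouple the second-moment estimate from Painter's adaptivity: you must actually split the $N$ edges into rounds (the paper uses a two-round split at each inductive step) so that the edges used in the current step are independent of the history you condition on. Without an explicit multi-round exposure, the second-moment calculation is not well defined. Finally, insisting on vertex-disjoint parallel attempts is both unnecessary and potentially fatal: once $n^{a}(N/n^2)^{k}$ exceeds $n$, you cannot have that many vertex-disjoint copies; the paper counts all copies and controls the correlations in the variance by bounding the number of copies of union graphs $D$ formed from two edge-overlapping copies of $B$, again using the density restriction on the overlap. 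Your "Janson-type" suggestion points in the right direction, but the proposal as written does not identify the right quantities to count or the mechanism that makes the exponent $2-1/d$ emerge.
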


The proof of Theorem~\ref{thm:general-upper-bound} is elementary and self-contained. It proceeds by standard small subgraphs type variance calculations, and combines multi-round exposure with the pigeon-hole principle to perform an exhaustive case distinction over all possible strategies Painter can use in the deterministic two-player game.

Let us illustrate our approach with two examples.

\emph{Example 1.} Consider the case where $F=C_\ell$ is a cycle of length $\ell\geq 3$ and $r=2$ colors are available. We will describe an explicit winning strategy for Builder that respects the density restriction $d:=\mTwoOnl(C_\ell,2)= \ell/(\ell-1)$ corresponding to the lower bound given by Theorem~\ref{thm:smart-greedy}. Applying Theorem~\ref{thm:general-upper-bound}, this yields a new elementary  proof of Theorem~\ref{thm:cycle-avoidance} that does not resort to offline coloring results.

\emph{Example 2.} Consider $r=2$ and $F$ the `bowtie' graph consisting of two triangles that are joined by an edge. This is one of the simplest cases in which the smart greedy strategy is \emph{not} optimal:
 According to Theorem~\ref{thm:smart-greedy}, the smart greedy strategy achieves a lower bound of $n^{29/21}= n^{1.380\dots}$, which by an ad hoc Painter strategy can be improved to $n^{60/43}= n^{1.395\dots}$. The results of~\cite{org-ub} do not yield any nontrivial upper bound for this example (the trivial one being $n^{2-1/m_2(F)}=n^{1.5}$, cf.~\eqref{eq:trivialbounds}), but using Theorem~\ref{thm:general-upper-bound} we can derive an upper bound of $n^{86/61}= n^{1.409\dots}$. The threshold of this example therefore satisfies $n^{1.395\dots}\leq N_0(F,2,n) \leq n^{1.409\dots}$.

Our result raises the question whether the best possible upper bound that can be derived from Theorem~\ref{thm:general-upper-bound} is indeed the threshold of the probabilistic game. Similarly, one may ask whether a lower bound counterpart of Theorem~\ref{thm:general-upper-bound} holds, i.e., whether the existence of a winning strategy for \emph{Painter} in the deterministic game with density restriction~$d$ implies a \emph{lower} bound of $n^{2-1/d}$ on the threshold of the probabilistic game. (An affirmative answer to the first question would imply that this is indeed the case.)
While we cannot answer these questions in general, we settle them in the affirmative for the case where $F$ is an arbitrary forest.

\subsection{A threshold result for forests}
Suppose $d$ is of the form $d=k/(k+1)$ for some integer $k\geq 1$. Then the restriction that Builder must not create a subgraph of density more than $d$ is equivalent to requiring that Builder creates no cycles and no components (=trees) with more than $k$ edges. We call this game the \emph{deterministic $F$-avoidance game with $r$ colors and tree size restriction~$k$}.

An elementary proof shows that for any forest $F$ and any integer $r$, Builder has a winning strategy in this game if $k$ is chosen large enough (see~\cite[Prop.~1]{MR2097326}; the result there is stated for $r=2$ but generalizes straightforwardly to any $r\geq 2$). It follows that there is a unique smallest integer $k$ for which Builder has a winning strategy. The next theorem states that the threshold of the probabilistic game for some fixed forest $F$ and integer $r$ is indeed given by this smallest integer.

\begin{theorem} \label{thm:main-result-forests}
Let $F$ be a forest with at least one edge, and let $r\geq 2$. Then the threshold of the online $F$-avoidance game with $r$ colors is
\begin{equation*}
  N_0(F,r,n)=n^{1-1/k^*(F,r)}\enspace,
\end{equation*}
where $k^*(F,r)$ is the smallest integer $k$ for which Builder has a winning strategy in the deterministic $F$-avoidance game with $r$ colors and tree size restriction $k$. The threshold is attained by any winning strategy for Painter in the deterministic game with tree size restriction $k^*(F,r)-1$.
\end{theorem}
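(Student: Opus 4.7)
The theorem asserts matching bounds $N_0(F,r,n) = n^{1-1/k^*(F,r)}$, which I plan to establish by proving the upper and lower bound separately.

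For the upper bound, the plan is to invoke Theorem~\ref{thm:general-upper-bound} directly. The key observation is that the tree size restriction $k$ is equivalent to the density restriction $d = k/(k+1)$: any forest whose components each have at most $k$ edges has density at most $k/(k+1)$, while any cycle (density $\geq 1$) or any tree with at least $k+1$ edges (density $(k+1)/(k+2) > k/(k+1)$) would exceed this bound. Substituting $d = k^*/(k^*+1)$ into Theorem~\ref{thm:general-upper-bound} then yields $N_0 \leq n^{2-1/d} = n^{1-1/k^*}$ as required.

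For the lower bound, the plan is to exhibit an explicit Painter strategy for the probabilistic game. By the minimality of $k^*$ together with the (immediate) determinacy of the deterministic game, Painter has a winning strategy $\mathcal{S}$ in the deterministic game with tree size restriction $k^*-1$; I fix any such $\mathcal{S}$, using its restriction to a board of $n$ vertices. Painter's strategy in the probabilistic game is simply to follow $\mathcal{S}$ on the random edge sequence. Since $\mathcal{S}$ was built to defeat any adversarial Builder who respects the restriction, this is a valid monochromatic-$F$-avoiding response as long as the evolving random graph remains a forest whose components each have at most $k^*-1$ edges.

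It therefore remains to verify that a.a.s., for $N = o(n^{1-1/k^*})$, the random graph $G(n,N)$ is indeed a forest with no component of size $\geq k^*$. I expect this step to be routine via the first moment method: the expected number of copies in $G(n,N)$ of any fixed tree with $k^*$ edges is of order $N^{k^*} n^{1-k^*} = o(1)$ in the target range of $N$, and taking a union bound over the finitely many isomorphism types of such trees shows a.a.s.\ no tree of size $k^*$ appears as a subgraph; a similarly standard computation shows a.a.s.\ no cycle appears, since $n^{1-1/k^*} = o(n)$ for $k^* \geq 2$ (the case $k^* = 1$ is trivial). Together these imply all components are trees of size at most $k^*-1$, so $\mathcal{S}$ guides Painter safely through the first $N$ steps. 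The only real difficulty of the whole result is packed into Theorem~\ref{thm:general-upper-bound}; beyond that, the forest case reduces to cleanly matching the game's structural restriction to the corresponding threshold in the random graph evolution.
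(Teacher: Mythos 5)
Your proof is correct and follows essentially the same route as the paper: the upper bound is a direct application of Theorem~\ref{thm:general-upper-bound} with $d = k^*/(k^*+1)$, and the lower bound follows by having Painter play a deterministic winning strategy for tree size restriction $k^*-1$, justified by a first-moment calculation showing that a.a.s.\ $G_N$ is a forest with all components of size at most $k^*-1$ when $N = o(n^{1-1/k^*})$. The one small point you make explicit that the paper leaves implicit is that such a Painter strategy exists by finite-game determinacy, which is indeed the right justification.
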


Note that Theorem~\ref{thm:main-result-forests} implies that for  the case of forests, the probabilistic aspect of the problem is fully understood, and in order to find the threshold of the probabilistic game for some given $F$ and $r$ it remains to solve the purely deterministic combinatorial problem of determining $k^*(F,r)$.
We will see that, in principle, this can be achieved by a finite calculation. However, this computation becomes intractable already for quite small examples. 

Clearly, it would be desirable to derive closed form expressions for $k^*(F,r)$ for some special families of trees. Unfortunately, we are only able to do so for the trivial case of stars $S_{\ell}$ with $\ell$ edges: For $k\leq r(\ell-1)$ Painter can win the game playing greedily, and for $k\geq r(\ell-1)+1$ Builder easily wins the game by the pigeon-hole principle. Thus for $\ell\geq 1$ and $r\geq 2$ we have
\begin{equation*}
  k^*(S_{\ell},r)=r(\ell-1)+1\enspace.
\end{equation*}

For the case of paths we were able to derive some partial results, which we present in the next section. 

\subsection{Exact values and bounds for paths} We focus on the $P_\ell$-avoidance game with $r=2$ colors, where $P_\ell$ denotes the path with $\ell$ edges. It was shown in~\cite{org-lb} that the greedy strategy yields a lower bound of 
\begin{equation} \label{eq:greedy-lb-paths}
  k^*(P_\ell,2)\geq  \ell +  \lceil \ell/2 \rceil (\ell-1) =: \kOnl(P_\ell,2)\enspace.
\end{equation}

Table~\ref{tab:bounds} lists the exact values of $\kOnl(P_\ell,2)$ and $k^*(P_\ell,2)$ (as defined in Theorem~\ref{thm:main-result-forests}) for all $\ell\leq 13$. The values $k^*(P_\ell,2)$ were determined with the help of a computer, using various branch-and-bound heuristics (cf.~Section~\ref{sec:exact-values}). As we can see, the threshold of the game coincides with the lower bound given by the greedy strategy for $\ell\in\{1,\dots, 13\}\setminus \{8,12\}$, but not for $\ell=8$ and $\ell=12$. This shows that the greedy strategy is not always optimal for trees, answering a question left open in~\cite{org-lb}.

\begin{table}
\begin{center}
\begin{tabular}{|l||c|c|c|c|c|c|c|c|c|c|c|c|c|}\hline
$\ell$            & 1 & 2 & 3 & 4  &  5 &  6 &  7 &           8 &  9 & 10 & 11 &          12 & 13 \\ \hline
$\kOnl(P_\ell,2)$ & 1 & 3 & 7 & 10 & 17 & 21 & 31 & \textbf{36} & 49 & 55 & 71 & \textbf{78} & 97 \\
$k^*(P_\ell,2)$   & 1 & 3 & 7 & 10 & 17 & 21 & 31 & \textbf{39} & 49 & 55 & 71 & \textbf{79} & 97 \\ \hline
\end{tabular}
\end{center}
\caption{Exact values of $\kOnl(P_\ell,2)$ and $k^*(P_\ell,2)$ for $\ell\leq 13$.} \label{tab:bounds}
\end{table}

The observation that for some values of $\ell$ we have $k^*(P_\ell,2) > \kOnl(P_\ell,2)$ raises the question by how much better strategies can improve on the greedy lower bound asymptotically as $\ell\to\infty$. Here we show that the improvement is at least by a constant and at most by a polynomial factor; note that $\kOnl(P_\ell,2)=(1/2+o(1))\cdot\ell^2$.

\begin{theorem} \label{thm:main-result-paths}
We have
\begin{equation*}
  (8/15 + o(1))\cdot \ell^2 \;\leq\; k^*(P_\ell,2) \;\leq\; \Theta\big(\ell^{2\log_2(1+\sqrt{3})}\big)=\Theta\big(\ell^{2.899\dots}\big)
\end{equation*}
as $\ell\to\infty$.
\end{theorem}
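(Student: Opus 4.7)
The plan is to prove the two bounds separately. Each reduces, via Theorem~\ref{thm:main-result-forests}, to constructing an explicit strategy in the deterministic $P_\ell$-avoidance game with $r=2$ colors and a carefully chosen tree size restriction $k$: a Painter strategy for the lower bound, a Builder strategy for the upper bound.

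\emph{Lower bound.} The target is a Painter strategy that survives for $k = (8/15+o(1))\ell^2$. The trees witnessing tightness of the greedy bound $\kOnl(P_\ell,2)\sim \ell^2/2$ from~\cite{org-lb} are essentially ``combs'' whose spine is an alternating two-color $P_{\ell-1}$ with short monochromatic teeth of the opposite color attached at spine vertices. I would define a non-greedy strategy that maintains, for every vertex~$v$, the lengths $L_1(v)$ and $L_2(v)$ of the longest monochromatic paths ending at~$v$ in each color, and that deviates from greedy on certain critical edges -- for instance, preferring color~$1$ precisely when the greedy color~$2$ choice would lock in the unfavorable alternating comb pattern on a long spine. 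A potential-function / worst-case-tree analysis refining the greedy argument of~\cite{org-lb} should then improve the leading constant from $1/2$ to $8/15$.

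\emph{Upper bound.} For the upper bound I would design a recursive Builder strategy. Let $T(\ell)$ be the smallest tree size admitting a ``rooted $P_\ell$-forcer'': a strategy on a subtree rooted at a designated vertex $v$ forcing Painter to create a monochromatic $P_\ell$ with one endpoint at $v$. The recursion combines two half-forcers of length $\lceil \ell/2\rceil$ (to cover each possible color of the first half-path) with two quarter-forcers (used to extend when the two halves clash in color), giving
\[
T(\ell) \;\leq\; 2\,T(\lceil \ell/2\rceil) + 2\,T(\lceil \ell/4\rceil) + O(1)\,.
\]
Substituting $T(\ell) = C\,\ell^\gamma$ reduces this to $2x^2+2x-1 = 0$ at $x = 2^{-\gamma}$, which yields $2^\gamma = 1+\sqrt{3}$ and hence $T(\ell) = \Theta(\ell^{\log_2(1+\sqrt{3})})$. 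Turning a rooted forcer into an unconditional one (producing a monochromatic $P_\ell$ anywhere in the component, not attached to a prescribed root) requires $\Theta(T(\ell))$ parallel applications of the rooted forcer followed by a pigeonhole step over colors, giving a total tree size of $\Theta(T(\ell)^2) = \Theta(\ell^{2\log_2(1+\sqrt{3})})$ as claimed.

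\emph{Main obstacle.} The upper bound is the more delicate step: the parallel forcers must share vertices in a tightly controlled way so that the global tree-size restriction is respected, and Painter must not be able to exploit the edges connecting them to sabotage the recursion by creating a long monochromatic path across forcers prematurely. Formulating a recursive invariant strong enough to be maintainable while keeping all overhead terms to $O(1)$ per level of the recursion is where the main work lies. The lower bound, by contrast, is essentially a finite refinement of the greedy analysis, and the challenge there is the case analysis needed to identify and rule out all extremal Builder responses against the modified Painter rule.
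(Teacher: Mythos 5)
Your proposal identifies the correct overall reduction (via Theorem~\ref{thm:main-result-forests}, produce a Painter strategy for the lower bound and a Builder strategy for the upper bound), but both halves have genuine gaps, and the upper-bound argument in particular does not match the paper and as stated would not work.

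\emph{Lower bound.} What you sketch --- maintain per-vertex path lengths, deviate from greedy when a comb pattern threatens, improve the constant by a potential-function analysis --- is a research plan, not a proof: you never specify the deviation rule, never identify the extremal trees for the new rule, and never actually compute $8/15$. The paper's Painter strategy is simpler and very concrete: color an edge red iff it is adjacent to a blue edge and does not close a red $P_4$, or if coloring it blue would close a blue $P_\ell$. The analysis then cuts the final red $P_\ell$ into roughly $\ell/5$ disjoint red $P_4$'s, shows each forces $\ell-1$ blue edges, and shows each blue path of length $k$ drags in at least $(5/3)k+O(1)$ extra edges via a ``heavy edge'' argument, giving $\ell+\lfloor\ell/5\rfloor(\ell-1)+\lfloor\ell/5\rfloor(\tfrac{5}{3}\ell+O(1))=\tfrac{8}{15}\ell^2+O(\ell)$. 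Without a concrete strategy and an identified extremal configuration, your lower bound is not a proof.

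\emph{Upper bound.} Here there is a substantive error, not just vagueness. A ``rooted $P_\ell$-forcer'' in your sense cannot control the \emph{color} of the path it produces --- Painter always retains the choice --- so two independent half-forcers give you two monochromatic $P_{\ell/2}$'s at the root that may well clash in color, and two quarter-forcers do not repair that: they again produce paths of Painter's choosing. Your recursion $T(\ell)\leq 2T(\ell/2)+2T(\ell/4)+O(1)$ is written down to hit the characteristic root $1+\sqrt{3}$, but you never explain what invariant the four sub-forcers certify at the root after the clash case, and the final ``$\Theta(T(\ell))$ parallel applications plus a pigeonhole step, hence squaring'' is asserted with no mechanism at all; nothing in the game naturally multiplies sizes. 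The paper sidesteps the color-control problem by tracking a two-dimensional invariant: a rooted colored tree $H_{r,b}$ whose root sees both a red $P_r$ and a blue $P_b$. Joining two copies of the current maximal $H_{r,b}$ at their roots doubles whichever of $r,b$ Painter picks (via $2r+1$ or $2b+1$) while zeroing the other coordinate, which is then restored in a second step using a stored border entry $H_{0,b}$ or $H_{r,0}$. Each two-step operation doubles the covered area $(r+1)(b+1)$; once this exceeds $\ell^2$ a monochromatic $P_\ell$ exists, which costs $\lambda\approx 2\log_2\ell$ operations, and the size recursion $\kappa_{i+1}\leq 2(\kappa_i+\kappa_{i-1})+3$ gives $\kappa_i=O((1+\sqrt 3)^i)$, hence $k^*(P_\ell,2)=O(\ell^{2\log_2(1+\sqrt 3)})$. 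The factor $2$ in the exponent comes from needing the product of the two coordinates to pass $\ell^2$, not from any squaring of a one-dimensional forcer; and the root $1+\sqrt 3$ comes from the Fibonacci-like size recursion, not from your $2x^2+2x-1=0$. That the two derivations land on the same number is a coincidence of the arithmetic, not evidence that your recursion is valid.
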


The constant $8/15$ is not best possible, and it is entirely conceivable that in fact $k^*(P_\ell,2)=\Omega(\ell^{2+\eps})$ for some $\eps>0$. (We can show such an improvement by a polynomial factor for the vertex-coloring variant of the problem mentioned in the next paragraph.)  The question of the order of magnitude of $k^*(P_\ell,2)$ remains an intriguing open problem.


\subsection{Two extensions} 
Our results extend to the natural generalization of the $F$-avoidance game where Painter is required to avoid a different graph $F_i$ in each color $i$, $1\leq i\leq r$. Both Theorem~\ref{thm:general-upper-bound} and Theorem~\ref{thm:main-result-forests} generalize straightforwardly to these $(F_1,\ldots,F_r)$-avoidance games.

 In~\cite{vertexcase}, a vertex-coloring variant of the online $F$-avoidance game was introduced, and threshold results similar to Theorem~\ref{thm:clique-avoidance} and Theorem~\ref{thm:cycle-avoidance} were proved for an arbitrary number of colors. Both Theorem~\ref{thm:general-upper-bound} and Theorem~\ref{thm:main-result-forests} can easily be adapted to this vertex setting. In fact, we see some hope of proving significantly stronger results for the vertex case, cf.\ the remarks in Section~\ref{sec:outlook}.

\subsection{Related work} Several variations of the deterministic two-player Ramsey game can be found in the literature. The game where no restrictions are imposed on Builder appears already in Beck's paper~\cite{MR701171}, and was introduced independently by Kurek and Ruci\'{n}ski~\cite{MR2152058}. The minimum number of steps needed for Builder to win the game with two colors is called the \emph{online (size) Ramsey number} of $F$~\cite{MR2152058}. Bounds on online Ramsey numbers for various graph classes were proved in~\cite{MR1249704, conlon:online-ramsey, MR2445473}, and exact values for some small graphs were determined in~\cite{MR2445473, MR2152058, pralat-to-appear, MR2381412}. 

Restricted variants of the deterministic game, where for some given graph class $\cH$ Builder has to obey the rule that the board is a graph from $\cH$ at all times, were studied in~\cite{west-et-al, MR2097326, MR2506387}. Specific families $\cH$ considered in these works include forests, planar graphs, $k$-colorable graphs, and graphs with maximum degree $k$. Note that the deterministic game studied in the present paper follows the same framework, with $\cH$ being the family of all graphs $B$ with $m(B)\leq d$.

Another notion that is related to our work is the \emph{Ramsey density} of a given graph $F$, which is defined as the infimum of $m(G)$ over all graphs $G$ that are $(F,2)$-Ramsey. This notion was introduced by Kurek and Ruci\'{n}ski~\cite{MR2152058}. 

Note that Theorem 4 suggests to define, for any graph $F$ and any integer $r$, the \emph{online Ramsey density} of $F$ and $r$ as the infimum over all $d$ for which Builder has a winning strategy in the deterministic $F$-avoidance game with $r$ colors and density restriction $d$. This can be seen as a natural combination of the two well-established concepts of online Ramsey numbers and Ramsey densities. In view of the many open questions revolving around these notions, it is not so surprising that also the online Ramsey densities studied here do not seem to be easily tractable.

\subsection{Organization of this paper}

We prove Theorem~\ref{thm:general-upper-bound} and derive Theorem~\ref{thm:main-result-forests} as a corollary in Section~\ref{sec:general-upper-bound}. In Section~\ref{sec:examples}, we present our examples for non-forests, reproving Theorem~\ref{thm:cycle-avoidance} in an elementary way and deriving new bounds for the bowtie example. After outlining in Section~\ref{sec:calculation} how for any forest $F$ and any $r\geq 2$ the parameter $k^*(F,r)$ can be determined by finite calculation, we focus on the special case of path-avoidance games in Section~\ref{sec:path-avoidance-games}. We discuss how the values in Table~\ref{tab:bounds} were found, and prove the asymptotic bounds stated in Theorem~\ref{thm:main-result-paths}. We conclude the paper by outlining some open questions in Section~\ref{sec:outlook}.

\section{Proof of Theorem~\ref{thm:general-upper-bound} and Theorem~\ref{thm:main-result-forests}}
\label{sec:general-upper-bound}

In order to prove Theorem~\ref{thm:general-upper-bound}, we identify Builder's strategies in the deterministic two-player game with $r$ colors (on a board with some fixed number $a$ of vertices) with $r$-ary rooted trees $\cT$, where each node of such a tree corresponds to an intermediate stage of the game. Specifically, the tree $\cT$ representing a given Builder strategy is constructed as follows: The root of $\cT$ is the empty graph on $a$ vertices. Its $r$ children are the graphs obtained by inserting the first edge of Builder's strategy and coloring it with one of the $r$ available colors. The $r$ children of each of these nodes are in turn obtained by inserting the second edge of Builder's strategy  and coloring it with one of the $r$ colors. (Note that the second edge of Builder's strategy may depend on Painter's decision how to color the first edge, i.e., in general the second edge will be a different one in different branches of $\cT$.) Continuing like this, we construct $\cT$, representing any situation in which Builder stops playing by a leaf of $\cT$.
Thus a node at depth $k$ in $\cT$ is an $r$-colored graph $B$ on $a$ vertices with exactly $k$ edges representing the board of the deterministic game after Painter's $k$-th move if Builder plays according to $\cT$. 

Note that in this formalization, a given tree $\cT$ represents a generic strategy for Builder (in the deterministic game with $r$ colors on a board with $a$ vertices) that may or may not satisfy a given density restriction $d$, and that can be thought of as a strategy for the~`$F$-avoidance' game for any given graph $F$. Clearly, $\cT$ is a legal strategy in the game with density restriction $d$ if and only if $m(B)\leq d$ for (the underlying uncolored graph of) every node $B$ in $\cT$. Moreover, $\cT$ is a winning strategy for Builder in a specific $F$-avoidance game if and only if every leaf of~$\cT$ contains a monochromatic copy of $F$.

Going back to the probabilistic one-player game, we denote the board of the probabilistic game after $N$ moves by $G_N$. Thus $G_N$ is an $r$-colored graph on $n$ vertices with exactly $N$ edges, and the underlying uncolored graph of $G_N$ is uniformly distributed over all graphs on $n$ vertices with $N$ edges. When we say that $G_N$ contains a copy of some $r$-colored graph $B$ (e.g.\ a node of some Builder strategy $\cT$) we mean that there is a subgraph of $G_N$ that is isomorphic to $B$ as a \emph{colored} graph. We write $f \ll g$ for $f=o(g)$, $f\gg g$ for $f=\omega(g)$, and $f\asymp g$ for $f=\Theta(g)$.

Theorem~\ref{thm:general-upper-bound} and Theorem~\ref{thm:main-result-forests} are immediate consequences of the following lemma.

\begin{lemma} \label{lemma:induction-done} Let $r\geq 2$ and $a\geq 1$ be fixed integers, let $d>0$ be a fixed real number, and let $\cT$ represent an arbitrary legal strategy for Builder in the deterministic game with $r$ colors and density restriction $d$ on a board with $a$ vertices.

If $N \gg  n^{2-1/d}$, then regardless of how Painter plays, \aas $G_N$ contains a copy of a leaf of~$\cT$.
\end{lemma}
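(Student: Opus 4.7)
The plan is to prove Lemma~\ref{lemma:induction-done} by induction on the depth of the Builder tree $\cT$, combined with multi-round exposure for the random graph process. Let $t$ denote the maximum depth of $\cT$ (a constant depending only on $\cT$, $a$, and $d$), and let $\mathcal{D}_i$ denote the set of nodes of $\cT$ at depth $i$. First I would couple $G_N$ with the union of $t$ independent random subgraphs $G^{(1)}, \ldots, G^{(t)}$ of $K_n$, each of edge probability $q\asymp p := N/\binom{n}{2}\gg n^{-1/d}$; set $G_{N_i}:=G^{(1)}\cup\cdots\cup G^{(i)}$, endowed with Painter's colors in the order the edges are revealed. The inductive assertion to maintain is: \aas (over the randomness in rounds $1, \ldots, i$ and Painter's responses), there exists some $B_i\in\mathcal{D}_i$ such that $G_{N_i}$ contains at least $c_i\cdot n^{v(B_i)}p^{e(B_i)}$ labeled colored copies of $B_i$, where $c_i=c_i(\cT,r)>0$ is a constant. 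The base case $i=0$ is immediate: the root is the empty graph on $a$ vertices and $n^0p^0=1$.

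For the inductive step, suppose the claim at depth $i-1$ holds with witness $B^*_{i-1}$ and $M := c_{i-1}\cdot n^{v(B^*_{i-1})}p^{e(B^*_{i-1})}$ copies. If $B^*_{i-1}$ happens to be a leaf we are already done; otherwise, let $uv$ denote the edge Builder plays next after $B^*_{i-1}$ in $\cT$, introducing $k\in\{0,1,2\}$ new vertices. For each copy of $B^*_{i-1}$ in $G_{N_{i-1}}$ and each of $\Theta(n^k)$ ways to assign vertices of $V(G_N)$ to the new endpoints one obtains a ``candidate extension'', yielding $\Theta(Mn^k)$ candidates in total. A candidate is \emph{realized} in $G_{N_i}$ iff its target edge lies in $G^{(i)}$, an event of probability $\Theta(q)=\Theta(p)$ that is independent of $G_{N_{i-1}}$ in the coupled model. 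Summing, the conditional expected number of realized extensions given $G_{N_{i-1}}$ is $\Theta(M\cdot n^k\cdot p)=\Theta(n^{v(B^*_{i-1})+k}\cdot p^{e(B^*_{i-1})+1})=\Theta(n^{v(B_i)}p^{e(B_i)})$ for every child $B_i$ of $B^*_{i-1}$ in $\cT$.

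The next step is a standard small-subgraph second-moment calculation showing that this count is concentrated around its conditional mean with probability $1-o(1)$; the key point is that every subgraph $K$ of any $B\in\cT$ satisfies $e_K/v_K\le d$, so $n^{v_K}p^{e_K}\gg 1$, and this bounds the pairwise correlations between candidates sharing common edges. Finally, each realized extension carries one of $r$ colors on its new edge, determined by Painter's strategy, so pigeonhole produces a color $c^*$ appearing on at least a $1/r$-fraction of the realized extensions. Setting $B^*_i := B^*_{i-1}+(uv)_{c^*}$ and $c_i := c_{i-1}/(3r)$ maintains the inductive claim at depth $i$. After $t$ rounds the witness $B^*_t$ is necessarily a leaf of $\cT$, and $G_N\supseteq G_{N_t}$ contains at least one copy of it, which is precisely the conclusion of the lemma.

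The principal technical obstacle is the concentration step: Painter's strategy is adaptive and her colorings in rounds $1, \ldots, i-1$ depend on the entire history, so one must condition on everything revealed up to the start of round $i$ (that is, on $G_{N_{i-1}}$ together with Painter's colors), and then exploit the independence of $G^{(i)}$ from this data to reduce the variance bound to a standard subgraph count in an auxiliary independent random graph. The density restriction $m(B)\le d$ enforced at every node $B\in\cT$ is precisely what keeps the second moment under control here, and is where the hypothesis of the lemma enters crucially. A secondary bookkeeping point is that the witness $B^*_{i-1}$ chosen at each level is itself a random node of $\cT$; since $|\cT|$ depends only on $\cT$ and not on $n$, this finite nondeterminism is absorbed freely and the $o(1)$ error probabilities accumulated over the constantly many rounds remain $o(1)$.
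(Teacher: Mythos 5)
Your proposal is correct and follows essentially the same approach as the paper's proof: induction on tree depth combined with multi-round exposure, a second-moment concentration argument whose control hinges precisely on the observation that the density restriction $m(B)\le d$ forces $n^{v_K}(Nn^{-2})^{e_K}\gg 1$ for every subgraph $K$ of every node, and a pigeonhole step to extract a majority color for the newly added edge. The paper's version differs only in bookkeeping --- it formulates the inductive statement as an alternative (``$G_N$ contains a leaf, or $G_N$ contains $\Omega(n^a(Nn^{-2})^k)$ copies of some node at depth $k$''), performs a recursive two-round split by applying the induction hypothesis to $G_{N/2}$ and running the variance calculation on the remaining $N/2$ edges directly in the uniform model $G(n,N)$, and derives the lemma by taking $k$ larger than the depth of $\cT$ --- rather than your single $t$-way decomposition via independent binomial rounds, but these are equivalent ways of organizing the same argument.
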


\begin{proof}[Proof of Theorem~\ref{thm:general-upper-bound}]
By assumption there exists an integer $a=a(F,r,d)$ such that Builder has a winning strategy $\cT$ for the deterministic $F$-avoidance game with $r$ colors and density restriction $d$ on a board with $a$ vertices. As each leaf of $\cT$ contains a monochromatic copy of $F$, applying Lemma~\ref{lemma:induction-done} to~$\cT$ yields that if $N\gg n^{2-1/d}$, \aas $G_N$ contains a monochromatic copy of $F$ regardless of how Painter plays, which is exactly the statement of Theorem~\ref{thm:general-upper-bound}.
\end{proof}

\begin{proof}[Proof of Theorem~\ref{thm:main-result-forests}]
Applying Theorem~\ref{thm:general-upper-bound} with $d=k^*(F,r)/(k^*(F,r)+1)$ immediately yields that $N_0(F,r,n)\leq n^{1-1/k^*(F,r)}$, i.e., Painter will  be forced to create a monochromatic copy of $F$ \aas if $N\gg n^{1-1/k^*(F,r)}$.
On the other hand, as long as $N\ll n^{1-1/k^*(F,r)}$, by standard first moment calculations (see for example~\cite[Section~3]{MR1782847}) \aas $G_N$ contains no cycle, and no tree of size $k^*(F,r)$. In other words, \aas all components of $G_N$ are trees of size at most $k^*(F,r)-1$. Hence by following a winning strategy for the deterministic game with tree size restriction $k^*(F,r)-1$ on a board with $n$ vertices (such a strategy exists by definition of $k^*$), Painter can ensure she will \aas not create a monochromatic copy of~$F$.
\end{proof}

\begin{remark}
Note that after any~$N\gg n$ steps we have $m(G_N)\geq N/n\gg 1$, i.e., the density of the board of the probabilistic game is larger than any constant $d$.
In other words, the random process Painter faces in the probabilistic one-player game only behaves like Builder in the deterministic two-player game (with some constant density restriction $d$) as long as $N=\mathcal{O}(n)$, which is equal to or less than the trivial lower bound $n^{2-1/m(F)}$ if $F$ is a non-forest (cf.~\eqref{eq:trivialbounds}). Therefore `playing just as in the deterministic game' without any additional assumptions only yields a useful guarantee to Painter if $F$ is a forest.
\end{remark}

In order to prove Lemma~\ref{lemma:induction-done}, we shall show the following more technical statement by induction on~$k$.

\begin{claim} \label{claim:induction}
Let $r\geq 2$ and $a\geq 1$ be fixed integers, let $d>0$ be a fixed real number, and let $\cT$ represent an arbitrary legal strategy for Builder in the deterministic game with $r$ colors and density restriction $d$ on a board with $a$ vertices.

If $n^{2-1/d}\ll N  \ll n^2$, then for any integer $k\geq 0$ the following holds.
Regardless of how Painter plays, \aas $G_N$ satisfies one of the following two properties:
\begin{itemize}
\item $G_N$ contains a copy of a leaf of~$\cT$, or
\item there is a node $B$ at depth $k$ in~$\cT$ such that $G_N$ contains  $\Omega(n^{a} (Nn^{-2})^{k})$ many copies of $B$.
\end{itemize}
\end{claim}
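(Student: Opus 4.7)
I would prove Claim~\ref{claim:induction} by induction on $k$ (with $r$, $a$, $d$, and the Builder strategy $\cT$ all fixed), following the paper's hint that the argument combines multi-round exposure with the pigeon-hole principle. The base case $k=0$ is trivial: the unique depth-$0$ node in $\cT$ is the empty graph on $a$ vertices, and every injection $[a]\hookrightarrow V(G_N)$ yields a copy, so $G_N$ contains $n(n-1)\cdots(n-a+1)=\Omega(n^a)$ copies deterministically.

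For the inductive step, assume the claim at depth $k$ and consider the standard two-round exposure: regard $G_N$ as obtained by first sampling $G_{N/2}$ and then adding a further batch of $N/2$ random edges. Since $N/2\gg n^{2-1/d}$ still holds, applying the induction hypothesis to $G_{N/2}$ (with the same $\cT$) yields \aas one of two situations. Either $G_{N/2}$ already contains a copy of a leaf of $\cT$, in which case so does $G_N$ and we are done; or there is a node $B$ at depth $k$ with $\mu:=\Omega\bigl(n^a(Nn^{-2})^k\bigr)$ colored copies in $G_{N/2}$. If $B$ is itself a leaf of $\cT$ we are again done; otherwise $B$ has $r$ children $B_1,\ldots,B_r$ in $\cT$, each obtained from $B$ by inserting the same ``builder edge'' $e_B$ between two prescribed vertices of $B$ and coloring it with one of the $r$ colors.

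For every copy $C$ of $B$ in $G_{N/2}$, let $e_C\subseteq V(G_N)$ denote the pair of vertices onto which $e_B$ is mapped; note that $e_C\notin E(G_{N/2})$ because $C$ is an isomorphic colored copy of $B$. Conditional on $G_{N/2}$, each such $e_C$ lies in the second batch $G_N\setminus G_{N/2}$ with probability $\asymp N/n^2$, so the random variable $X:=\#\{C:e_C\in G_N\setminus G_{N/2}\}$ has conditional mean $\asymp\mu\,N/n^2$. A Chebyshev/second-moment estimate should then give $X\geq \tfrac12\mathbb{E}[X\mid G_{N/2}]$ \aas. Painter now colors each of these $X$ extension edges with one of $r$ colors, so by the pigeon-hole principle at least $X/r$ of them receive a common color $i\in[r]$; each such extended copy is a (colored) copy of the child $B_i$ of $B$, yielding the required $\Omega\bigl(n^a(Nn^{-2})^{k+1}\bigr)$ copies of a depth-$(k+1)$ node in $G_N$.

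The main technical difficulty I anticipate is the concentration of $X$. Two sources of dependence need care: (a) the family of copies $C$ is itself random and depends on Painter's play, so the variance bound must hold uniformly on the good event delivered by the induction hypothesis; and (b) distinct copies may satisfy $e_C=e_{C'}$ and thus contribute to $\mathrm{Var}(X)$. The density restriction imposed by Builder, which guarantees $m(B\cup\{e_B\})\leq d$, is precisely what bounds the ``shared skeleton'' of two colliding copies: it should allow one to estimate the number of coincident pairs as $O(\mu^2/n^2)$, making $\mathrm{Var}(X)$ negligible compared to $(\mathbb{E} X)^2$ exactly under the hypothesis $N\gg n^{2-1/d}$. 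This is the only place where that hypothesis is actually invoked, and getting the bookkeeping right is the crux of the proof.
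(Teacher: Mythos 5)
Your proof follows the paper's approach in all essentials: induction on the depth $k$, two-round exposure into $G_{N/2}$ plus a second batch of $N/2$ edges, a second-moment argument to show that the number of builder-edge completions concentrates around its mean, and finally the pigeon-hole principle to pass to a monochromatic child of $B$. However, there is one genuine error. You assert that $e_C\notin E(G_{N/2})$ ``because $C$ is an isomorphic colored copy of $B$.'' This is false: a copy of $B$ only requires the \emph{edges} of $B$ to appear with the right colors as a subgraph; it in no way forbids further edges of $G_{N/2}$ among the images of $V(B)$, so nothing prevents $e_C$ from already being present. This is not a cosmetic slip, since for such $C$ the conditional probability $\Pr[e_C\in G_N\setminus G_{N/2}\mid G_{N/2}]$ is zero, and your computation of $\mathbb{E}[X\mid G_{N/2}]\asymp\mu\,N/n^2$ is then unjustified as stated. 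The paper handles exactly this point by a short case distinction: if already $\Omega\bigl(n^a(Nn^{-2})^{k+1}\bigr)$ of the copies $C$ have $e_C\in E(G_{N/2})$, then each such $C\cup\{e_C\}$ is a colored copy of some child of $B$ and pigeon-hole finishes the proof immediately; otherwise, because $n^a(Nn^{-2})^{k+1}\ll\mu$, one may discard those copies and assume without loss of generality that no $e_C$ lies in $G_{N/2}$, after which your conditional-probability computation is valid.

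One further remark on the part you deferred: the heuristic ``$O(\mu^2/n^2)$ coincident pairs'' is not the bound the argument actually produces or requires. The variance is controlled by counting, for each subgraph $J$ along which two extended copies of $B$ may overlap, the expected number of copies of the resulting union graph $D$ in $G_{N/2}$; one then uses that the density restriction gives $e_J/v_J\le m(B)\le d$, hence $n^{v_J}(Nn^{-2})^{e_J}\gg 1$, to deduce $\mathrm{Var}[Z]\ll\mathbb{E}[Z]^2$. You correctly identify this as the place where $N\gg n^{2-1/d}$ is used and where the crux lies, so the outline you give would carry the argument once that bookkeeping is done; I only flag that the precise quantity to bound is the $D$-count over all overlap patterns $J\subseteq B$, not a single birthday-type estimate.
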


The second property of the claim is meaningful since, due to the  assumption that $\cT$ is a legal strategy for Builder in the game with density restriction $d$, we have
\begin{equation*}
  k/a=e_B/v_B \leq m(B)\leq d\enspace,
\end{equation*}
which yields with $N\gg n^{2-1/d}\geq n^{2-a/k}$ that
\begin{equation*}
n^{a}(N n^{-2})^{k} \gg  1\enspace.
\end{equation*}

\begin{proof}[Proof of Lemma~\ref{lemma:induction-done}]
Since $\cT$ has depth at most $\binom{a}{2}$, Lemma~\ref{lemma:induction-done} follows by setting $k:=\binom{a}{2}+1$ in Claim~\ref{claim:induction}.
\end{proof}

It remains to prove Claim~\ref{claim:induction}.

\begin{proof} [Proof of Claim~\ref{claim:induction}]
We proceed by induction on $k$. Clearly, $G_N$ contains $\Theta(n^a)$ copies of the root of $\cT$, i.e., vertex sets of size $a$. This takes care of the induction base.

For the induction step we employ a two-round approach. That is, we divide the process into two rounds of equal length $N/2$ (w.l.o.g.\ we assume $N$ to be even) and analyze these two rounds separately. Specifically, we apply the induction hypothesis and some standard random graph arguments to the edges of the first round, and then show by a variance calculation that, conditional on a `good' first round, the second round turns out as claimed.

By the induction hypothesis, if the graph $G_{N/2}$ does not  contain a copy of a leaf of $\cT$ (in which case we are done), \aas it contains a family of
\begin{equation} \label{eq:M}
M\asymp n^a (Nn^{-2})^{k-1}
\end{equation}
 copies of some graph $B^{-}$ corresponding to a non-leaf node at depth $k-1$ in $\cT$. We label these copies $B^{-}_i$, $1\leq i\leq M$. For a given copy $B^-_i$, consider a vertex pair corresponding to Builder's next move as specified by $\cT$, and call this vertex pair $e_i$. (If this does not define $e_i$ uniquely, we simply fix one possible choice of $e_i$.)
If $e_i$ is an edge of $G_{N/2}$, then clearly $B^-_i$ and $e_i$ form a copy of one of the children of $B^-$ in $\cT$. If this is the case for $n^a (Nn^{-2})^{k}$ many indices~$i$, then by the pigeon-hole principle the color used for the majority of these indices yields a child of $B^-$ for which the inductive claim holds, and we are done with the proof. For the remainder of the proof we assume that this is not the case. Due to $n^a (Nn^{-2})^{k}\ll M$, we can safely ignore indices for which $e_i$ is in $G_{N/2}$; therefore, we assume w.l.o.g.\ that none of the $e_i$ is in $G_{N/2}$.

For $1\leq i\leq M$, let $Z_i$ be the indicator variable for the event that $e_i$ is among the $N/2$ edges drawn in the second round. Let
\begin{equation*}
  Z:=\sum_{i=1}^{M}Z_i\enspace,
\end{equation*}
and note that by the pigeon-hole principle at least $Z/r$ many copies of one of the children of $B^{-}$ in $\cT$ are created. Thus the existence of $B$ as claimed follows if we show that  \aas
\begin{equation} \label{eq:Z}
Z \asymp   n^{a} (N n^{-2})^{k}= n^{v_B} (N n^{-2})^{e_B} \enspace.
\end{equation}
We will do so by the methods of first and second moment.

Using that $N\ll n^2$ we have
\begin{align} \label{eq:indicator-prob}
\Pr[Z_i=1] = \frac{\binom{\binom{n}{2}-N/2-1}{N/2-1}}{\binom{\binom{n}{2}-N/2}{N/2}} &\asymp Nn^{-2}\enspace,
\end{align}
and, conditioning on the first round satisfying the induction hypothesis,
\begin{equation} \label{eq:EZ}
\E[Z] \asymp M\cdot Nn^{-2} \asBy{eq:M}n^{a} (N n^{-2})^{k}= n^{v_B}(N n^{-2})^{e_B} \enspace.
\end{equation}

In the following we slightly abuse notation and write $B$ for the \emph{uncolored} graph formed by $B^-$ and the next edge of Builder's strategy $\cT$.
Let $\cD$ denote the family of all (uncolored) graphs $D$ that can be constructed by considering the union of two edge-intersecting copies of $B$ and removing one edge from the intersection of these two copies. To calculate the variance of $Z$, observe that for pairs with $e_i\neq e_j$ the variables $Z_i$ and $Z_j$ are negatively correlated. Hence such pairs can be omitted, and we have
\begin{equation} \label{eq:variance1}
\begin{split}
\var[Z] =& \sum_{i,j=1}^{M}(\E[Z_i Z_j]-\E[Z_i]\E[Z_j])
\leq \sum_{(i,j)
:\; e_i = e_j}\Pr[Z_i=1\wedge Z_j=1] \\
\asBy{eq:indicator-prob} & \sum_{(i,j)
:\; e_i = e_j} Nn^{-2} \leq M_\cD \cdot \Theta(1) \cdot Nn^{-2} \enspace,
\end{split}
\end{equation}
where $M_\cD$ denotes the total number of copies of graphs $D\in\cD$ in (the underlying uncolored graph of) $G_{N/2}$. By definition of $\cD$, each such graph satisfies
\begin{equation} \label{eq:evD}
\begin{split}
v_D &= 2v_B-v_J\enspace,\\
e_D &= 2 e_B - e_J-1
\end{split}
\end{equation}
for some subgraph $J\seq B$. Moreover, since we assumed that $\cT$ is a legal strategy for Builder in the game with density restriction $d$, we have
\begin{equation*}
  e_J/v_J\leq m(B)\leq d\enspace,
\end{equation*}
which yields with $N\gg n^{2-1/d}\geq n^{2-v_J/e_J}$ that
\begin{equation} \label{eq:J}
\begin{split}
n^{v_J}(N n^{-2})^{e_J} \gg  1\enspace.
\end{split}
\end{equation}

Thus the expected number of copies of $D$ in (the underlying uncolored graph of) $G_{N/2}$  is
\begin{equation*}
\begin{split}
&\binom{n}{v_D}\cdot \Theta(1)\cdot \frac{{\binom{\binom{n}{2}-e_D}{N/2-e_D}}}{\binom{\binom{n}{2}}{N/2}}
\asymp n^{v_D} (N n^{-2})^{e_D} \\
\eqBy{eq:evD} &\; n^{2v_B-v_J} (N n^{-2})^{2e_B - e_J - 1} \llBy{eq:J} n^{2v_B} (N n^{-2})^{2e_B -1}\enspace.
\end{split}
\end{equation*}
As the number of graphs in $\cD$ is bounded by a constant depending only on $a$, it follows with Markov's inequality that
\begin{equation} \label{eq:MJ}
 M_{\cD}\ll n^{2v_B} (N n^{-2})^{2e_B - 1}
\end{equation}
a.a.s. Thus, conditioning on the first round satisfying the induction hypothesis (cf.~\eqref{eq:M} and~\eqref{eq:EZ}) and \eqref{eq:MJ}, we obtain from \eqref{eq:variance1}  that
\begin{align*}
\var[Z] & \llBy{eq:MJ} \(n^{v_B} (N n^{-2})^{e_B}\)^2
\stackrel{\eqref{eq:EZ}}{\asymp} \E[Z]^2\enspace.
\end{align*}
Chebyshev's inequality now yields that \aas the second round satisfies \eqref{eq:Z}. This implies that there is at least the claimed number of copies of one of the children of $B_-$ in $G_N$, as discussed.
\end{proof}

\section{Examples}
\label{sec:examples}

In this section we present our examples illustrating how Theorem~\ref{thm:general-upper-bound} can be applied to derive explicit upper bounds on the threshold of the original (probabilistic one-player) online $F$-avoidance game.

\subsection{Elementary proof of Theorem~\ref{thm:cycle-avoidance}}\label{sec:reprove-cycle-avoidance}

In the following we rederive the threshold of the cycle-avoidance game with $r=2$ colors in a more elementary way. Specifically, we replace the upper bound proof given in~\cite{org-ub} by an application of Theorem~\ref{thm:general-upper-bound}.

\begin{figure}
\centering
\psfrag{tell}{$T_\ell$}
\psfrag{c}{$C$}
\psfrag{p1}{$1$}
\psfrag{pu}{$u=1$}
\psfrag{p2}{$2$}
\psfrag{p3}{$3$}
\psfrag{p4}{$4$}
\psfrag{p5}{$5$}
\psfrag{p6}{$6$}
\psfrag{g1}{$G^1_\ell$}
\psfrag{g2}{$G^2_\ell$}
\psfrag{case1}{Case 1}
\psfrag{case2}{Case 2}
\psfrag{phase1}{\parbox[t]{8cm}{End of phase 1 with a monochromatic copy of the tree $T_\ell$. The dashed edges will be presented in phase 2.}}
\psfrag{phase2}{\parbox[t]{3.9cm}{End of phase 2 with a monochromatic cycle $C$ of length $\ell(\ell-1)$. The dashed edges will be presented in phase 3 ($G^1_\ell$ and $G^2_\ell$ are the graphs including these edges).}}
\psfrag{phase3}{\parbox[t]{6.2cm}{Highlighted in grey are subgraphs of $G^1_\ell$ and $G^2_\ell$ with maximum density $d=\ell/(\ell-1)$.}}
\includegraphics{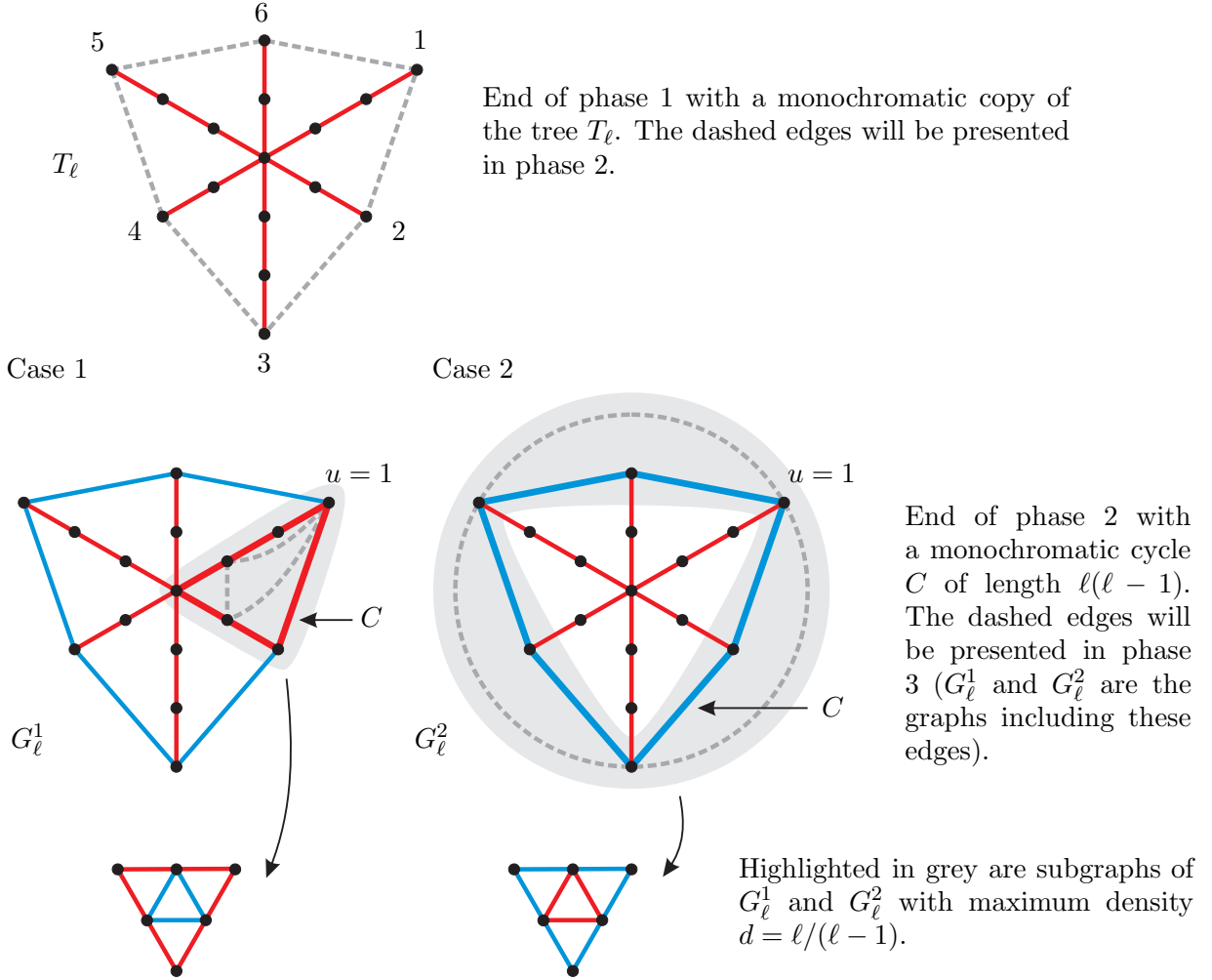}
\caption{Builder strategy to enforce a monochromatic copy of $C_\ell$  in the deterministic game with $r=2$ colors ($\ell=3$).} \label{fig:force-cycle}
\end{figure}

\begin{proof}[Proof of Theorem~\ref{thm:cycle-avoidance}] 
Applying Theorem~\ref{thm:smart-greedy} with $F=C_\ell$ and $r=2$ yields $\mTwoOnl(C_\ell,2)=\ell/(\ell-1)$ and establishes $n^{2-1/\mTwoOnl(C_\ell,2)}=n^{1+1/\ell}$ as a lower bound for the threshold of the probabilistic $C_\ell$-avoidance game with $r=2$ colors. To show that this lower bound given by the greedy strategy is tight, we apply Theorem~\ref{thm:general-upper-bound} and specify a winning strategy for Builder in the deterministic $C_\ell$-avoidance game with $r=2$ colors and density restriction $d:=\mTwoOnl(C_\ell,2)=\ell/(\ell-1)>1$. To define Builder's strategy, let $T_\ell$ denote the tree that is obtained by replacing half of the edges of a star with $\ell(\ell-1)$ edges by paths of length $\ell(\ell-1)/2$, and the other half by paths of length $\ell(\ell-1)/2-1$.

Builder's strategy consists of three phases (cf.~Fig.~\ref{fig:force-cycle}). In the first phase he enforces a monochromatic copy of $T_\ell$ without creating any cycles on the board. As already mentioned in the introduction, it has been proved in \cite[Prop.~1]{MR2097326} that Builder can enforce a monochromatic copy of \emph{any} tree without creating cycles on the board. At the end of the first phase, the monochromatic copy of $T_\ell$ (\wolog we assume that it is a red copy) is contained in some larger tree $T'$. This phase is noncritical as far as the density restriction $d>1$ is concerned, as for any tree $T$ we have $m(T)=e_T/v_T=1-1/v_T<1$.

In the second and third phase Builder joins vertices of the red copy of $T_\ell$ to enforce a monochromatic copy of $C_\ell$. Note that a subgraph $H$ of any non-forest $G$ that maximizes $e_H/v_H$ has the property that each of its edges is contained in a cycle of $G$. As Builder joins only vertices of the red copy of $T_\ell$ in the second and third phase, we can neglect the edges of $T'$ that do not belong to this copy when checking whether the density restriction $d$ is respected. For simplicity we will therefore describe how Builder proceeds with an \emph{isolated} red copy of $T_\ell$ in the second phase (and neglect the embedding of this copy into the larger tree $T'$). Referring to the vertex of maximum degree $\ell(\ell-1)$ of the copy of $T_\ell$ as the root, we label the leaves of this copy with distance $\ell(\ell-1)/2$ from the root by $1,3,5,\ldots,\ell(\ell-1)-1$, and the leaves with distance $\ell(\ell-1)/2-1$ from the root by $2,4,6,\ldots,\ell(\ell-1)$. In the second phase Builder adds $\ell(\ell-1)$ edges, each edge connecting two vertices with successive labels (where the labels $\ell(\ell-1)$ and $1$ are defined to be successive as well). If Painter uses red for one of these edges (Case~1), then a red cycle of length $\ell(\ell-1)$ is created (containing exactly one edge from the second phase). On the other hand, if Painter always uses blue (Case~2), then a blue cycle of length $\ell(\ell-1)$ is created (consisting only of edges from the second phase). In any case, Builder has enforced a monochromatic cycle $C$ of length $\ell(\ell-1)$. Let $u$ denote a vertex with an odd label in this cycle.

In the third phase, Builder connects any two consecutive vertices along $C$ whose distance from $u$ is an integer multiple of $\ell-1$ with a new edge (thus adding $\ell$ edges in total). Clearly, Painter cannot avoid creating a monochromatic copy of $C_\ell$ by the end of this phase.

Depending on the outcome of the second phase (Case~1 or Case~2), the resulting graphs after the third phase, denoted by $G^1_\ell$ and $G^2_\ell$, respectively, are different, and it remains to check that $m(G^1_\ell)\leq d$ and $m(G^2_\ell)\leq d$. Straightforward calculations show that the maximum density of both graphs is indeed bounded by $d$; in fact it is exactly $d$ (cf.\ the bottom part of Fig.~\ref{fig:force-cycle}).
\end{proof}

\subsection{Bounds for the bowtie example} In the following, $F$ denotes the `bowtie' graph consisting of two triangles that are joined by an edge. We prove that the threshold of the online $F$-avoidance game with $r=2$ colors satisfies
$n^{60/43}\leq N_0(F,2,n) \leq n^{86/61}$.

\begin{proof}[Lower bound proof] To prove the claimed lower bound, we consider the following Painter strategy: Color an edge blue if and only if it does not close a blue copy of $F$ and coloring it red would close a red triangle. We will perform a backward analysis, showing that if Painter plays according to this strategy and loses the game with a monochromatic copy of $F$, then the board contains as a subgraph one of a finite family $\cW$ of `witness' graphs, where each graph $W\in\cW$ has a density of $m(W)\geq 43/26$. A standard first moment calculation (see for example~\cite[Section~3]{MR1782847}) yields that for any $N\ll n^{2-26/43}=n^{60/43}$, \aas the board $G_N$ contains no graph from $\cW$ (here we use that the family $\cW$ is finite), which implies the claim.

By definition of the strategy, the game ends with a red copy of $F$. When the last edge in each of the two triangles of this copy was colored red, the alternative for Painter must have been to complete a blue copy of $F$. This implies that six blue edges are adjacent to each red triangle. Each of these blue edges in turn was colored blue only because the alternative was to close a red triangle. Fig.~\ref{fig:bowtie-lb} shows two `nice' possible witness graphs $W_1$ and $W_2$ resulting from this analysis. 
Of course, a witness graph resulting from the above argument is not necessarily as nicely symmetric as $W_1$ and $W_2$. Moreover, some of the blue or red edges could in fact coincide (the graph on the right hand side of Fig.~\ref{fig:bowtie-lb} is such an example). Our analysis therefore yields a fairly large (but finite) family $\cW$ of witness graphs. A straightforward but rather tedious case analysis shows that all graphs $W\in\cW$ satisfy $m(W)\geq 43/26=m(W_1)=m(W_2)$, concluding the proof.
\end{proof}

\begin{proof}[Upper bound proof]
To prove an upper bound of $n^{86/61}$, we describe a strategy for Builder to enforce a monochromatic copy of $F$ in the deterministic $F$-avoidance game with density restriction $d=61/36$. Then Theorem~\ref{thm:general-upper-bound} implies an upper bound of $n^{2-1/d}=n^{86/61}$ for the threshold of the probabilistic game. Builder's strategy consists of two phases. In the first phase he enforces six triangles of the same color (\wolog in blue) in the same way as described in the proof of Theorem~\ref{thm:cycle-avoidance} (cf.\ Fig.~\ref{fig:force-cycle}). In the second phase Builder selects one vertex from each blue triangle and joins those vertices to a copy of $F$. Clearly, Painter cannot avoid creating a monochromatic copy of $F$ by the end of the second phase. Builder's strategy is illustrated in Fig.~\ref{fig:bowtie-ub} (the dashed edges are presented in the second phase), where some parts of the board that were necessary for Builder to enforce the blue triangles are hidden. It is readily checked that the graph $G$ shown in the figure is in fact the densest subgraph of the board (even if the hidden edges are taken into account).
\end{proof}

\begin{figure}
\centering
\psfrag{w1}{$W_1$}
\psfrag{w2}{$W_2$}
\psfrag{w2p}{$W_2'$}
\psfrag{mw12}{$m(W_1)=m(W_2)=\frac{4\cdot 9+7}{4\cdot 6+2}=\frac{43}{26}$}
\psfrag{mw2p}{$m(W_2')=\frac{36}{20}>m(W_2)$}
\includegraphics{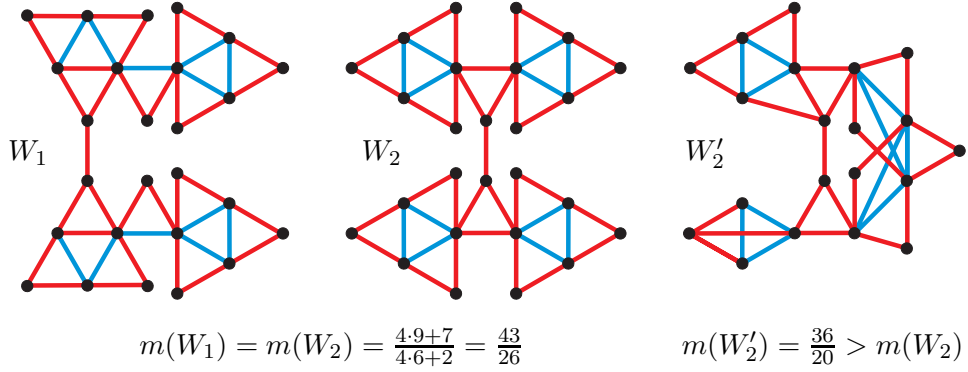}
\caption{Three graphs from the family of witness graphs $\cW$.} \label{fig:bowtie-lb}
\end{figure}

\begin{figure}
\centering
\psfrag{g}{$G$}
\psfrag{mg}{$m(G)=\frac{6\cdot 9+7}{6\cdot 6}=\frac{61}{36}$}
\includegraphics{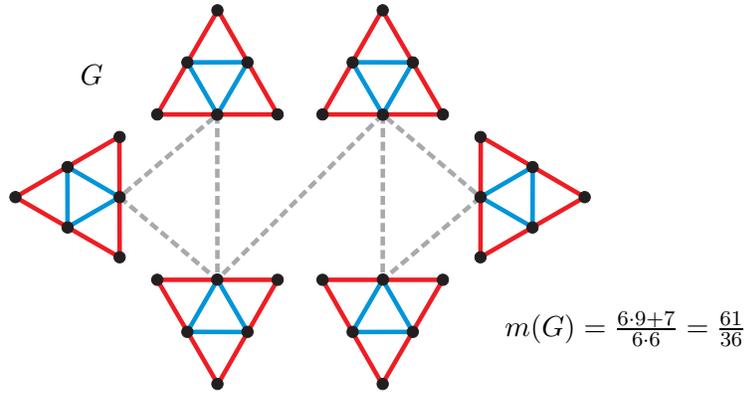}
\caption{Builder strategy to enforce a monochromatic copy of the `bowtie' graph.} \label{fig:bowtie-ub}
\end{figure}

\section{Calculation of $k^*(F,r)$}
\label{sec:calculation}

We outline how the integer $k^*(F,r)$, defined in Theorem~\ref{thm:main-result-forests} as the smallest integer $k$ for which Builder has a winning strategy in the deterministic $F$-avoidance with $r$ colors and tree size restriction $k$, can be found by a finite calculation. First observe that if Builder confronts Painter several times with the decision on how to color a new edge between copies of the same two $r$-edge-colored trees (rooted trees, to be more precise, the root representing the connection vertex), then by the pigeon-hole principle, Painter's decision will be the same in at least a $(1/r)$-fraction of the cases. As a consequence we can assume \wolog that Painter plays \emph{consistently} in the sense that her strategy is determined by a strategy function $\pi$ that maps unordered pairs of $r$-edge-colored rooted trees to the set of available colors $\{1,\ldots,r\}$. For each such strategy function $\pi$ we define the family $\cT^\pi_{r,k}$ as the set of all $r$-edge-colored trees on exactly $k$ edges that Builder can enforce if Painter plays as specified by $\pi$. Observe also that $\cT^\pi_{r,k}$ can be calculated recursively, by joining for each $i=0,\ldots,k-1$ every tree in the family $\cT^\pi_{r,i}$ with every tree in the family $\cT^\pi_{r,k-i-1}$ with a new edge in all possible ways. (The basis for the recursion is the set $\cT^\pi_{r,0}$, which contains only an isolated vertex.) Note that so far this formalism is completely generic, i.e., independent of the specific forest~$F$ Painter tries to avoid.

For a given forest $F$ and a given strategy function $\pi$, it is straightforward to determine the smallest integer $k^*(F,r,\pi)$ such that all components (=trees) of $F$ appear monochromatically in the same color in $\bigcup_{i=0}^{k^*(F,r,\pi)} \cT^\pi_{r,i}$. This value $k^*(F,r,\pi)$ is exactly the lower bound on $k^*(F,r)$ that the strategy function $\pi$ guarantees to Painter. As mentioned in the introduction, an argument given in~\cite{MR2097326} yields an explicit $k_0=k_0(F,r)$ such that for any size restriction $k\geq k_0$ Builder wins the deterministic $F$-avoidance game. Thus we have $k^*(F,r,\pi) \leq k_0$ for all strategy functions $\pi$, and consequently there are only finitely many strategy functions that have to be taken into account (as only the coloring decisions on pairs of rooted trees on at most $k_0$ many edges are relevant). Therefore, $k^*(F,r)$ can  be calculated as the maximum of $k^*(F,r,\pi)$ over finitely many strategy functions $\pi$. 

\section{Path-avoidance games}
\label{sec:path-avoidance-games}

Throughout this section, we study the deterministic game with $F=P_\ell$ (the path with $\ell$ edges) and $r=2$ colors.

\subsection{Exact values for $\ell \leq 13$.}\label{sec:exact-values}
Let us sketch briefly how the exact values of $k^*(P_\ell,2)$ given in Table~\ref{tab:bounds} were determined with the help of a computer. The approach from Section~\ref{sec:calculation} can be easily adapted to compute upper bounds on $k^*(F,r)$ (instead of exact values) by computing only small subsets of the families $\cT^\pi_{r,k}$. The hope is that considering these suffices to enforce a monochromatic copy of $F$ effectively, and that therefore we do not need to branch on too many decisions of Painter (i.e., values of the strategy function $\pi$). If suitable heuristics are used, this approach turns out to be much faster than a full exhaustive enumeration. Moreover, it has the advantage that any resulting upper bound comes with an explicit Builder strategy. Our computer-generated Builder strategies establishing the values given in Table~\ref{tab:bounds} as upper bounds on $k^*(P_\ell,2)$, $\ell\leq 13$ are available on the authors' websites \cite{homepage}, along with a verification routine. 

It remains to prove the matching lower bounds. As discussed in the introduction, for $\ell\in\{1,\ldots,13\}\setminus\{8,12\}$ such lower bounds are provided by the greedy strategy, and it remains to consider the cases $\ell=8$ and $\ell=12$. Here the greedy strategy yields a lower bound of $\kOnl(P_8,2)=36$ and $\kOnl(P_{12},2)=78$, respectively; indeed Fig.~\ref{fig:p8greedy} shows an easy way for Builder to win against a greedy Painter in the $P_8$-avoidance game with tree size restriction $36$ (where Painter greedily colored blue whenever this did not close a blue $P_8$, and all blue edges appeared before all red edges). In the following we propose and analyze a Painter strategy that outperforms the greedy strategy and yields $k^*(P_8,2)\geq 39$ and $k^*(P_{12},2)\geq 79$, thus establishing matching lower bounds for these remaining cases.

\begin{figure}
\centering
\psfrag{eg}{$\kOnl(P_8,2)=8+4\cdot 7=36$}
\includegraphics[scale=0.8]{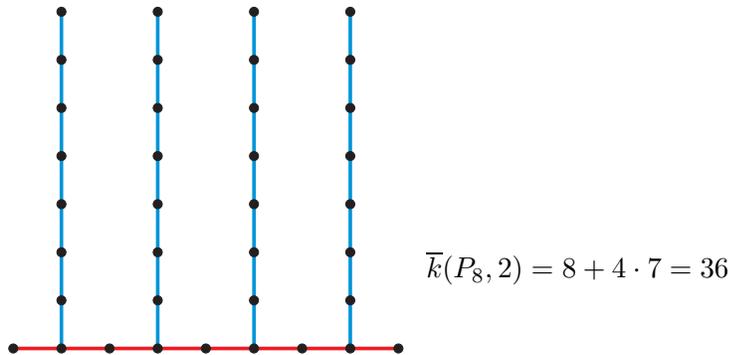}
\caption{Analysis of the greedy strategy for the $P_8$-avoidance game.} \label{fig:p8greedy}
\end{figure}

\begin{proof}[Proof of $k^*(P_8,2)\geq 39$ and $k^*(P_{12},2)\geq 79$] Consider the following generic Painter strategy for the $P_\ell$-avoidance game: Color an edge blue if and only if it is adjacent to a red edge and does not close a blue $P_\ell$. Let us focus on the case $\ell=8$ first. We will perform a backward analysis, showing that if Painter plays according to this strategy and loses the deterministic $P_8$-avoidance game with a monochromatic $P_8$, then Builder must have closed a component (=tree) with at least 39 edges, thus establishing the claimed lower bound for the smallest tree size restriction that guarantees a win for Builder.

By definition of the strategy, the game ends with a red $P_8$. Each edge of this~$P_8$ was colored red either because it was not adjacent to any red edge, or because coloring it blue would have closed a blue $P_8$. In the latter case we call a red edge \emph{heavy}. Note that among any two adjacent edges on the red $P_8$, the one that appeared last must be heavy. We now mark all heavy edges on the red $P_8$. Observe that by unmarking some of these edges again we can always guarantee a pattern of four marked edges where either all of them are disjoint or exactly two of them are adjacent (see Fig.~\ref{fig:p8}).

In the remainder of the proof, we will argue that each heavy edge implies 10 additional edges, and that two adjacent heavy edges imply 11 additional edges. This then proves our claim as we have counted at least $8+\min(4\cdot 10,2\cdot 10+11)=39$ edges in each component (=tree) that might have forced Painter to close a monochromatic $P_8$.

By definition, each heavy edge is adjacent to two blue paths of length $k$ and $7-k$ for some $0\leq k\leq 7$. For any such $k$ the two blue paths contain at least three disjoint blue edges (indicated by dotted ellipses in Fig.~\ref{fig:p8}) that are not adjacent to the central red $P_8$. Again by definition of the strategy, at least one extra red edge must be adjacent to each of these blue edges. We have thus counted at least $7+3=10$ additional edges for each heavy edge.

Assume that two adjacent heavy edges are given, say $(u,v)$ and $(v,w)$. As argued in the previous paragraph, the edge $(u,v)$ guarantees at least 10 additional edges. If the length of the blue path adjacent to $v$ is strictly smaller than 7, then at least one additional blue edge must be adjacent to $w$ (cf.\ the right hand side of Fig.~\ref{fig:p8}). Otherwise, a blue $P_7$ is adjacent to $v$. But this blue path must have been completed \emph{before} both heavy edges appeared, hence not only three but at least four additional red edges are adjacent to this blue $P_7$. In both cases we are guaranteed at least $11$ additional edges for each pair of adjacent heavy edges. This concludes the proof for the case $\ell=8$.

A very similar analysis of the same strategy for $\ell=12$ yields a lower bound of $12 + 16 + 3\cdot17 = 79$ for $k^*(P_{12},2)$, where in the extremal example we have 12 edges coming from the central red $P_{12}$, 16 additional edges implied by an isolated heavy edge, and $3\cdot 17$ additional edges implied by three pairs of adjacent heavy edges.
\end{proof}

\begin{figure}
\centering
\psfrag{pa}{+10 edges}
\psfrag{pb}{+11 edges}
\psfrag{u}{$u$}
\psfrag{v}{$v$}
\psfrag{w}{$w$}
\includegraphics{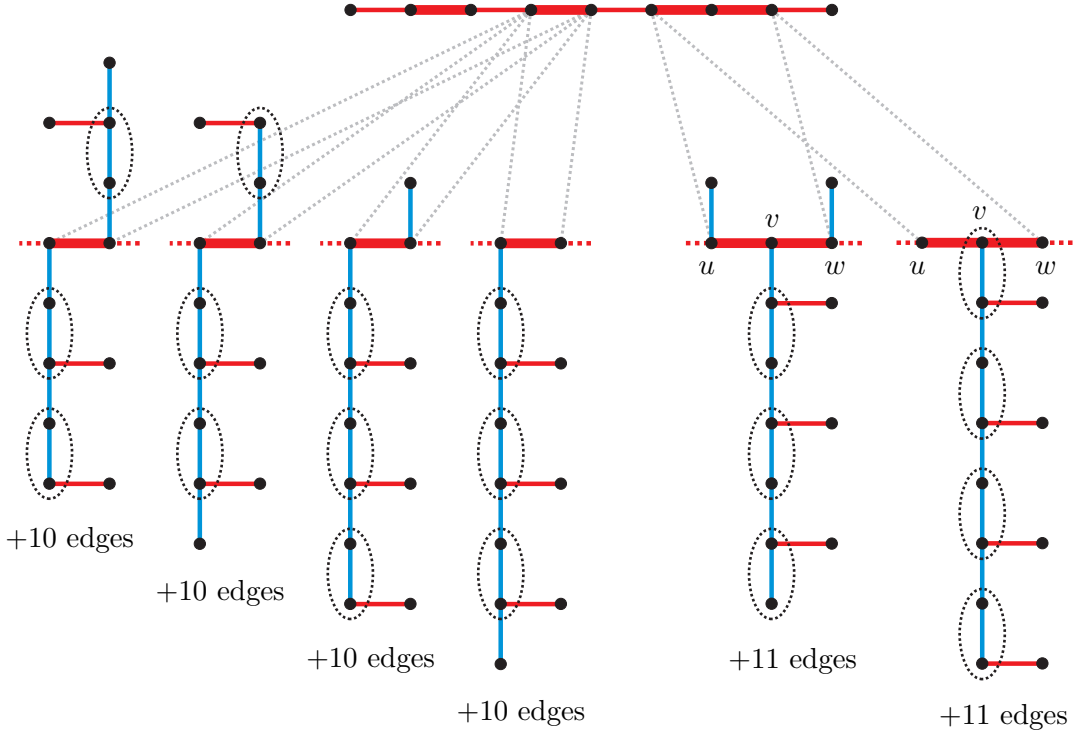}
\caption{Analysis of an optimal strategy for the $P_8$-avoidance game (heavy edges are indicated by bold lines).} \label{fig:p8}
\end{figure}

\subsection{Asymptotic bounds} In this section, we prove the bounds on $k^*(P_\ell,2)$ claimed in Theorem~\ref{thm:main-result-paths} by giving explicit strategies for Painter and Builder.

\begin{proof}[Proof of Theorem~\ref{thm:main-result-paths} (lower bound)]
Consider the following Painter strategy: Color an edge red if and only if (at least) one of the following two conditions holds: (a) the edge is adjacent to a blue edge and it does not close a red $P_4$, (b) coloring the edge blue would close a blue $P_\ell$.

As before, we will perform a backward analysis, showing that if Painter plays according to this strategy and loses the deterministic $P_\ell$-avoidance game with a monochromatic $P_\ell$, then Builder must have closed a component (=tree) with at least $\frac{8}{15}\ell^2+\cO(\ell)$ many edges, thus establishing the claimed lower bound for the smallest tree size restriction that guarantees a win for Builder.

By definition of the strategy, Painter loses with a red $P_\ell$. This $P_\ell$ can be partitioned into at least $\lfloor\ell/5\rfloor$ disjoint red paths of length 4. For each of those $P_4$'s, when the last edge was inserted (and colored red), the alternative for Painter must have been to complete a blue $P_\ell$. Hence $\ell-1$ blue edges are adjacent to the last edge of each red $P_4$. These form two disjoint blue paths of length $k$ and $\ell-k-1$ for some $k\geq 0$.

In the remainder of the proof, we will argue that a blue path of length $k$ guarantees at least $\frac{5}{3}k+\cO(1)$ additional edges that are adjacent to it. This then proves our claim as we have counted at least
\begin{equation*}
  \underbrace{\ell}_{\text{central red $P_\ell$}}
  + \underbrace{\left\lfloor\frac{\ell}{5}\right\rfloor \cdot (\ell-1)}_{\text{blue `almost-copies' of $P_\ell$}}
  + \underbrace{\left\lfloor\frac{\ell}{5}\right\rfloor \cdot \left(\frac{5}{3}\ell+\cO(1)\right)}_{\text{additional edges}}
  = \frac{8}{15} \ell^2+\cO(\ell) \enspace.
\end{equation*}
edges in each component (=tree) that might have forced Painter to close a monochromatic $P_\ell$.

So consider a blue path of length $k$, where $k<\ell$. Each of its edges was colored blue because condition~(a) in the strategy definition was violated, either because the edge was not adjacent to any blue edge, or because coloring it red would have closed a red $P_4$. In the latter case we call a blue edge \emph{heavy}. Note that among any two adjacent edges on the blue $P_k$, the one that appeared last must be heavy. We now mark all heavy edges on the blue $P_k$. If three or more marked edges appear consecutively, then we unmark one or more of the interior edges again such that the resulting pattern only consists of marked paths of length one or two with exactly one unmarked edge in between (see Fig.~\ref{fig:heavy-edges}). Similarly to the proof for the case $\ell=8$ in the previous section, unrolling the history of the heavy edges according to the given Painter strategy yields that each heavy edge implies at least 4 additional edges (3 red and 1 blue), and two adjacent heavy edges imply at least 5 additional edges (4 red and 1 blue, or 3 red and 2 blue). We can thus partition the blue $P_k$ (minus a constant number of border edges) according to the marked edge pattern into segments of length 2 and 3 such that there are at least 4 additional edges for each blue segment of length 2, and at least 5 additional edges for each blue segment of length 3. Hence, in total the Painter strategy guarantees at least $\frac{5}{3}k+\cO(1)$ additional edges for any blue path of length $k$, as required.
\end{proof}

\begin{figure}
\psfrag{p4}{+4 edges}
\psfrag{p5}{+5 edges}
\includegraphics{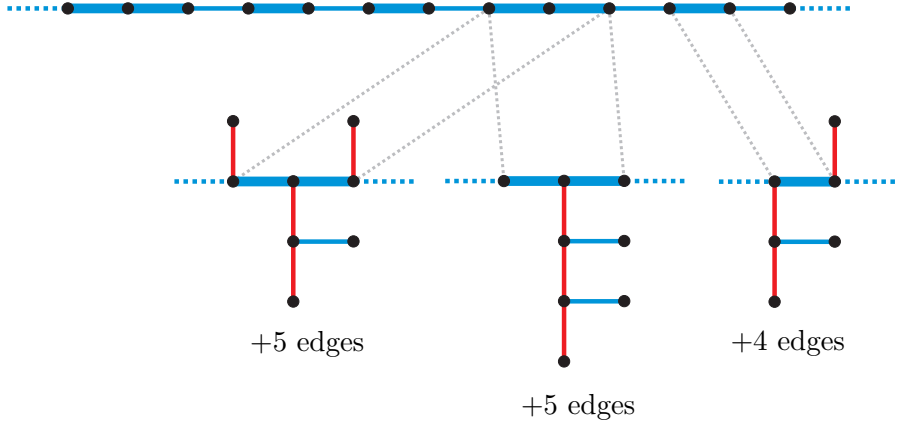}
\caption{Additional edges implied by heavy edges (heavy edges are indicated by bold lines).} \label{fig:heavy-edges}
\end{figure}

\begin{remark} \label{rem:asymmetric}
The above proof relates the $P_\ell$-avoidance game to the asymmetric variant of the game, where Painter's objective is to avoid paths of different lengths in the two colors. For integers~$\ell$ and~$c$, we define $k^*(P_\ell,P_c)$ as the smallest integer~$k$ for which Builder has a winning strategy in the asymmetric $(P_\ell,P_c)$-avoidance game with two colors and tree size restriction $k$.
In the preceding proof we gave a simple strategy guaranteeing $k^*(P_\ell,P_4)\geq (8/3+o(1))\cdot\ell$, and used this to derive a lower bound of $k^*(P_\ell,2)\geq ({8}/{15}+o(1))\cdot \ell^2$ for the symmetric game. 
More generally, if for some fixed $c$ we have a `simple' strategy guaranteeing a lower bound of the form $k^*(P_\ell,P_c)\geq (t+o(1))\cdot\ell$, by the same arguments we can infer a lower bound of $k^*(P_\ell,2)\geq(t/(c+1)+o(1))\cdot \ell^2$ for the symmetric game. This observation might be useful for deriving better lower bounds on $k^*(P_\ell,2)$.
\end{remark}

\begin{proof}[Proof of Theorem~\ref{thm:main-result-paths} (upper bound)]
We describe a strategy for Builder to enforce a monochromatic $P_\ell$ while constructing only components with at most
\begin{equation*}
  {\textstyle\frac{9+5\sqrt{3}}{6}}\cdot\ell^{2\log_2(1+\sqrt{3})}\approx 2.943\cdot\ell^{2.899\ldots}
\end{equation*}
many edges, thus establishing the claimed upper bound for $k^*(P_\ell,2)$. W.l.o.g.\ we may and will assume that Painter plays consistently in the sense of Section~\ref{sec:calculation}.
We say that a rooted edge-colored tree has the \emph{$(r,b)$-property} if its root is adjacent to both a red $P_r$ and a blue $P_b$. Observe that a rooted colored tree with the $(r,b)$-property also has the $(r',b')$-property for all $r'\leq r $ and $b'\leq b$.

To keep track of which colored trees were already created during the game, Builder maintains a table $H$ with row and column indices $\{2^i-1\mid i=0,1,2,\ldots\}=\{0,1,3,7,15,\ldots\}$, where the entry $H_{r,b}$ is either empty or a rooted colored tree with the $(r,b)$-property. A nonempty table entry at $(r,b)$ means that Builder has already enforced a copy of the tree $H_{r,b}$ on the board (and, by the assumption that Painter plays consistently, can enforce as many additional copies of $H_{r,b}$ as he wants).
Initially the table has only a single entry $H_{0,0}=K_1$ (as at the beginning of the game the board consists only of isolated vertices).

In successively filling the table $H$ Builder will maintain the following two properties throughout: There is a \emph{maximal entry} $H_{r,b}$ in the sense that for all other nonempty table entries $H_{r',b'}$ we have $r'\leq r$ and $b'\leq b$. Moreover, all entries $H_{r',0}$ with $r'\leq r$ and all entries $H_{0,b'}$ with $b'\leq b$ are nonempty. We shall refer to the quantity $(r+1)\cdot (b+1)$ as the \emph{covered area} of the table. Clearly, as soon as the covered area exceeds $\ell^2$, Builder has enforced a monochromatic $P_\ell$.

We now describe two Builder operations, each of which exactly doubles the covered area. Each of the two operations consists of two steps and the two operations differ only in their second step, depending on Painter's coloring decision in the first step. The first step is as follows: Builder connects two copies of the maximal entry $H_{r,b}$ at their root vertices. If Painter uses color red, then the resulting tree is used to fill the table entry $H_{2r+1,0}$ (taking one of the end vertices of the red $P_{2r+1}$ as the new root); if Painter uses color blue, then the resulting tree is used to fill the table entry $H_{0,2b+1}$ (taking one of the end vertices of the blue $P_{2b+1}$ as the new root). In the first case (Operation~1) Builder connects a copy of $H_{2r+1,0}$ and a copy of $H_{0,b}$ at their root vertices in the second step; regardless of Painter's coloring decision, the resulting graph can be used as the new maximal entry $H_{2r+1,b}$ (taking one of the end vertices of the added edge as the new root). Otherwise (Operation~2) Builder connects a copy of $H_{0,2b+1}$ and a copy of $H_{r,0}$ at their root vertices in the second step; regardless of Painter's coloring decision the resulting graph can be used as the new maximal entry $H_{r,2b+1}$ (again taking one of the end vertices of the added edge as the new root).
Introducing the abbreviation $e_{r,b}:=e(H_{r,b})$, the sizes of the resulting trees are
\begin{equation} \label{eq:op1}
\begin{split}
  e_{2r+1,0} &= 2 e_{r,b}+1 \\
  e_{2r+1,b} &= e_{2r+1,0}+e_{0,b}+1
\end{split}
\end{equation}
or
\begin{equation} \label{eq:op2}
\begin{split}
  e_{0,2b+1} &= 2 e_{r,b}+1 \\
  e_{r,2b+1} &= e_{0,2b+1}+e_{r,0}+1\enspace,
\end{split}
\end{equation}
respectively.

In this way, depending on Painter's coloring decisions, Builder performs a sequence $\sigma\in\{1,2\}^{\{0,1,\ldots\}}$ of connect-operations (Operation~1 or Operation~2). We obtain two corresponding sequences $\mu,\kappa\in\mathbb{N}^{\{0,1,\ldots\}}$ of sizes $e_{2r+1,0}$ or $e_{0,2b+1}$ of `border'-entries of the table $H$ and of sizes $e_{2r+1,b}$ or $e_{r,2b+1}$ of the maximal table entry in each step, respectively. If e.g.\ $\sigma=(1,2,2,1,1,2,\ldots)$, then we have
\begin{align*}
  \mu    &= (e_{0,0},e_{1,0},e_{0,1},e_{0,3},e_{3,0},e_{7,0},e_{15,0},\ldots)=(0,1,5,15,35,103,239,\ldots) \enspace, \\
  \kappa &= (e_{0,0},e_{1,0},e_{1,1},e_{1,3},e_{3,3},e_{7,3},e_{15,3},\ldots)=(0,2,7,17,51,119,343,\ldots) \enspace,
\end{align*}
cf.~Table~\ref{tab:builder-table}. Note that during the first operation the entry at $(1,0)$ is updated twice (as each operation consists of \emph{two} steps by definition). Even though Builder  wastes some steps in this way, saving them would only complicate the analysis and not change the resulting asymptotic bounds.

\begin{table}
\begin{center}
\begin{tabular}{|r||c|}\hline
$e(H_{r,b})$ & $\, 0 \quad\quad\;\;\, 1 \quad\quad\;\;\, 3 \quad\quad\;\;\; 7$ \\ \hline
\parbox{3mm}{0 \vspace{4.3mm} \\  1 \vspace{4.3mm} \\ 3 \vspace{4.3mm} \\ 7} & 
\parbox{4.5cm}{
$
\begin{xy}
\xymatrix@R=4mm@C=6mm{
  0 \ar[d]_1     &  5 \ar[d]_{2\,}  &  15 \ar[d]_{2\,}    &  239 \ar[ddd]_{2\,\,\,} \\
  1,2 \ar[ru]    &  7 \ar[ru]   &  17 \ar[lld]    &  \\
  35 \ar[rr]^1   &              &  51 \ar[lld]    &  \\
  103 \ar[rr]^1  &              &  119 \ar[uuur]  &  343
}
\end{xy}
$
}
 \\ \hline
\end{tabular}
\end{center}
\caption{Sizes of the resulting trees for the sequence $\sigma=(1,2,2,1,1,2,\ldots)$ of Builder operations to enforce a monochromatic path.} \label{tab:builder-table}
\end{table}

Combining \eqref{eq:op1} and \eqref{eq:op2} yields the general recursion
\begin{align*}
  \mu_0 &= 0 \enspace, \\
  \kappa_0 &= 0 \enspace, \\
  \mu_{i+1} &= 2 \kappa_i+1 \enspace, \\
  \kappa_{i+1} &= \mu_{i+1}+\mu_j+1 \text{  for some $0\leq j\leq i$} \enspace.
\end{align*}

Note that both $\mu$ and $\kappa$ are monotonously increasing, implying that $\kappa_{i+1}\leq\mu_{i+1}+\mu_i+1=2(\kappa_i+\kappa_{i-1})+3$. Solving this recursion yields
\begin{equation} \label{eq:bound-epii}
  \kappa_i \leq {\textstyle\big(\frac{1}{2}+\frac{1}{\sqrt{3}}\big)}(1+\sqrt{3})^i+\underbrace{{\textstyle\big(\frac{1}{2}-\frac{1}{\sqrt{3}}\big)}(1-\sqrt{3})^i - 1}_{\leq 0}
           \leq {\textstyle\big(\frac{1}{2}+\frac{1}{\sqrt{3}}\big)}(1+\sqrt{3})^i
\end{equation}
for all $i\geq 0$.

As both Operation~1 and Operation~2 exactly double the covered area of the table $H$, we have $k^*(P_\ell,2)\leq \kappa_\lambda$, where $\lambda$ is the smallest integer such that $2^\lambda>\ell^2$. Note that the definition of $\lambda$ implies
\begin{equation} \label{eq:lambda}
  \lambda\leq 2\log_2(\ell)+1\enspace.
\end{equation}
Combining these results gives
\begin{equation*}
  k^*(P_\ell,2) \leq \kappa_\lambda
  \leBy{eq:bound-epii} {\textstyle\big(\frac{1}{2}+\frac{1}{\sqrt{3}}\big)}(1+\sqrt{3})^\lambda
  \leBy{eq:lambda} {\textstyle\frac{9+5\sqrt{3}}{6}}\cdot\ell^{2\log_2(1+\sqrt{3})}\enspace.
\end{equation*}
\end{proof}

\section{Open questions}
\label{sec:outlook}

Let us conclude this paper by stating some open questions. We see two main directions for  possible future work. On the one hand, it would be interesting to further investigate the relation between the probabilistic one-player and the deterministic two-player game, with the goal of deriving `abstract' results in the vein of Theorem~\ref{thm:general-upper-bound} and Theorem~\ref{thm:main-result-forests}. In our view, the main open question in this direction is the following. For any graph $F$ and any integer $r$, define the \emph{online Ramsey density} $m_2^*(F,r)$ as 
\begin{multline} \label{eq:m2star}
   m_2^*(F,r):=\inf\Big\{d\in\RR \;\big|\; \text{Builder has a winning strategy in the deterministic } \\ \text{$F$-avoidance game with $r$ colors and density restriction $d$}\Big\}\enspace,
   \end{multline}
i.e., as the infimum over all $d$ for which Theorem~\ref{thm:general-upper-bound} is applicable. 

\begin{question} \label{question}
Is it true that for any graph $F$ and any integer $r\geq 2$, the threshold of the online $F$-avoidance game with $r$ colors is
\begin{equation*}
  N_0(F,r,n) = n^{2-1/{m_2^*(F,r)}} \enspace?
\end{equation*}
\end{question}

For the \emph{vertex-coloring} variant of the problem~\cite{vertexcase}, we have some preliminary results suggesting that the answer to the analogous question is `yes'. We are currently working on a full proof of this conjecture. (For the edge case studied here, we dare not utter such a conjecture and prefer to pose the problem in the open form of Question~\ref{question}.)

The results in this paper answer Question~\ref{question} in the affirmative for the case of cycles and two colors, and for the case of forests and an arbitrary number of colors. In both cases the infimum in~\eqref{eq:m2star} is attained as a minimum, which in particular implies that $m_2^*(F,r)$ is rational.  
In general, we do not even know whether for all $F$ and $r$ the threshold is of the form $N_0(F,r,n)=n^{2-1/x}$ for some rational number $x=x(F,r)$. (Note that a threshold that is not of this form would necessarily have to be sharp, in contrast to what is the case in all known examples; cf.\ the remarks after Theorem~2.1 of~\cite{MR2116574}.)
Let us also point out that Question~\ref{question} remains open for cliques and two colors, even though an explicit threshold function is known for this case (cf.~Theorem~\ref{thm:clique-avoidance}).

A second goal for further research would be to derive further explicit threshold formulas for some specific graphs, either by applying Theorem~\ref{thm:general-upper-bound} or by coming up with new proof techniques. In~\cite{org-ub}, it was conjectured that for cliques and cycles and any number of colors, the lower bound given by Theorem~\ref{thm:smart-greedy} is in fact the threshold of the probabilistic game. Can a matching upper bound be derived from Theorem~\ref{thm:general-upper-bound}?   Currently we are unable to do so even for the simplest open case $F=K_3$, $r=3$. Finally, it would be interesting to improve upon the bounds in Theorem~\ref{thm:main-result-paths} by exhibiting better strategies for Painter or Builder, possibly arguing via the asymmetric case as outlined in Remark~\ref{rem:asymmetric}.

\bibliographystyle{plain}
\bibliography{refs}

\end{document}